\documentclass[reqno,10pt]{amsart}
\usepackage{amssymb,amsmath,amsthm,amsfonts,amssymb,latexsym}
\usepackage{hyperref}
\newtheorem{coro}{Corollary}[section]
\newtheorem{theorem}{Theorem}[section]

\theoremstyle{remark}

\DeclareMathOperator{\pood}{pod}
\DeclareMathOperator{\peed}{ped}
\numberwithin{equation}{section}
\newcommand{\zisum}{\sum_{n=0}^\infty}

\begin{document}
\title[Linear relations for the number of overpartitions into odd parts]{Linear relations for the number of overpartitions into odd parts}
\author[Deepthi G. and S. Chandankumar]{Deepthi G. and S. Chandankumar*}
\address[S. Chandankumar*]{Department of Mathematics and Statistics, M. S. Ramaiah University of Applied Sciences, Peenya Campus, Peenya 4th Phase, Bengaluru-560 058, Karnataka, India.} 
\email{chandan.s17@gmail.com}
\address[Deepthi G.]{Department of Mathematics and Statistics, M. S. Ramaiah University of Applied Sciences, Peenya Campus, Peenya 4th Phase, Bengaluru-560 058, Karnataka, India.} \email{deepthigopal911@gmail.com}

\begin{abstract}
Let $\overline{p}_o(n)$ denote the number of overpartitions of $n$ into odd parts. The partition function  $\overline{p}_o(n)$ has been the subject of many recent studies where many explicit Ramanujan-like congruences were discovered. In this paper, we provide three linear recurrence relation for $\overline{p}_o(n)$. Several connections with partitions into parts not congruent to $2
\pmod 4$, overpartitions and partitions into distinct parts are
presented in this context. 
\end{abstract}
	
\subjclass[2010]{05A17, 11P83} \keywords{Partition; overpartition; linear relations}

\maketitle
	
\section{Introduction}
\noindent A partition of a nonnegative integer $n$ is a non-increasing sequence of positive integers that add up to $n$. For example, there are $7$ partitions of $5$:
$$5,\,\,\,4+1,\,\,\,3+2,\,\,\,3+1+1,\,\,\,2+2+1,\,\,\,2+1+1+1,\,\,\,1+1+1+1+1.$$
We denote unrestricted partition of $n$ by $p(n)$. The generating function of $p(n)$ is given by 
$$\sum_{n=0}^{\infty} p(n) q^n =\dfrac{1}{(q;q)_\infty},$$ 
where
\[(a;q)_n=\begin{cases}
	1, & \text{for} \ \ n=0,\\
	\displaystyle \prod_{k=1}^{n} (1-aq^{k-1}), &\text{for} \ \ n>0,
\end{cases}\]
is the $q$-shifted factorial and, 
\[(a;q)_\infty=\lim_{n\rightarrow\infty}(a;q)_n, \qquad |q|<1.\]
An overpartition of the nonnegative integer $n$ is a partition of $n$ where the first occurrence of parts of each size may be overlined. For example, there are 8 overpartitions of the integer 3:
\[3, \ \ \overline{3}, \ \ 2+1, \ \ \overline{2}+1, \ \ 2+\overline{1}, \ \ \overline{2}+\overline{1}, \ \ 1+1+1, \ \ \overline{1}+1+1.  \]
We denote the number of overpartitions of $n$ by $\overline{p}(n)$.	The generating function of $\overline{p}(n)$ is given by
\begin{equation}\label{gfop}
\sum_{n=0}^{\infty} \overline{p}(n)q^n= \frac{(-q;q)_{\infty}}{(q;q)_{\infty}},
\end{equation}
For more details on overpartitions, one can refer \cite{CL}, \cite{HS1}, \cite{HS2} and \cite{KM}.\\
 Define $\overline{p_o}(n)$ to be the number of overpartitions of $n$ into odd parts. The following series-product identity is due to Lebesgue \cite{Lb} and also it represents the generating function of $\overline{p_o}(n)$
\begin{equation*}
\sum_{j=0}^\infty \frac{(-1;q)_j q^{j(j+1)/2}}{(q;q)_j}=\frac{(-q;q^2)_\infty}{(q;q^2)_\infty}.
\end{equation*}
For more details on overpartitions into odd parts one can refer to \cite{SCC} and \cite{HS}.\\
Euler \cite{GE} has presented the following recurrence relation on the unrestricted partition function $p(n)$ involving pentagonal numbers,
\begin{align*}
	\begin{split}
		& p(n)-p(n-1)-p(n-2)+p(n-5)+p(n-7)-\cdots
		\\ &+(-1)^k p(n-k(3k-1)/2)+(-1)^k p(n-k(3k+1)/2)+\cdots=\begin{cases}
			1 & \text{if $n=0$},\\
			0 & \text{otherwise}.\\
		\end{cases}
	\end{split}
\end{align*}
Further, in 1973 Ewell \cite{EJA} has given a recurrence relation for the partition function $p(n)$:
\begin{align*}
	\begin{split}
		& p(n)-p(n-1)-p(n-3)+p(n-6)+\cdots
		\\ &+(-1)^{\lceil{j/2}\rceil} p(n-j(j-1)/2)+(-1)^{\lceil{j/2}\rceil} p(n-j(j+1)/2)+\cdots=\begin{cases}
			0 & \text{if $n$ is odd},\\
			p_d(n/2) & \text{if $n$ is even},\\
		\end{cases}
	\end{split}
\end{align*}
where $p_d(n)$ represents partition function of $n$ with distinct parts. \\
Choliy, Kolitsch and Sills \cite{CYK} have also given two more Euler type recurrences for $p(n)$ which are stated below,
\begin{align*}
	\begin{split}
		&p(n)-p(n-1)-p(n-2)+p(n-4)+p(n-8)-\cdots+\\
		&(-1)^j p(n-j^2)+(-1)^j p(n-2j^2)+\cdots=\begin{cases}
			0 & \text{if $n$ is odd},\\
			p_{do}(n) & \text{if $n$ is even},\\
		\end{cases} 
	\end{split}
\end{align*} 
and 
\begin{align*}
	& p(n)-2p(n-1)+2p(n-4)-2p(n-9)+\cdots+(-1)^j 2p(n-j^2)+\cdots=(-1)^n p_{do}(n),
\end{align*}		
where $p_{do}(n)$ denotes the partition function of $n$ with distinct odd parts.\\
In \cite{MM2}, Mircea Merca gave two linear recurrence relations for the partition function $p(n)$ using consequences of the bisectional pentagonal number theorem. He also established a relation showing that the partition functions $p(n)$ and $p_{do}(n)$ are of the same parity. One of the main result is given below for $n\geq0$:
\begin{equation*}
	\sum_{k=0}^{\infty} (-1)^{\lceil{k/2}\rceil} p \left(n-\frac{G_k}{2}\right) - \sum_{k=0}^{\infty} p \left(\frac{n}{2} - \frac{k(k+1)}{8}\right)=0,
\end{equation*}
with $p(x) = 0$ for all $x\notin\mathbb{Z}_{\geq 0}$ and $G_k$ represents generalized pentagonal numbers given as:
$$G_k = \dfrac{1}{2} {\lceil{k}/{2}\rceil} \left(3 {\lceil {k}/{2}\rceil}+(-1)^k\right)$$
Recently, Robson da Silva \cite{RS} has given a few recurrence relations for $p(n)$ and other partition functions. Merca \cite{MM1} provided two recurrence relations for the partition function $\peed(n)$, where $\peed(n)$ represents the partition function of $n$ with distinct even parts and unrestricted  odd parts. The generating function for $\peed(n)$ is given as
\begin{align*}
	\zisum \peed(n) q^n = \dfrac{(q^4;q^4)_\infty}{(q;q)_\infty}.
\end{align*}  
The two recurrence relations for $\peed(n)$ are due to Merca.
\begin{theorem} \label{thm:2.2.1}
	For $n \ge 0$,
	\begin{equation*}
		\sum_{j=0}^{\infty} (-1)^{\lceil{j/2}\rceil}\ \peed(n-j(j+1)/2)=\begin{cases}
			1 & \text{if $n=k(k+1)$},\hspace{0.5cm} k\in \mathbb{N}_0,\\
			0 & \text{otherwise}.\\
		\end{cases}
	\end{equation*}
\end{theorem}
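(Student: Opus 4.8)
The plan is to recognize the left-hand side as the coefficient of $q^n$ in an explicit infinite product, and then to show that this product equals $\sum_{k\ge 0}q^{k(k+1)}$. First I would note that, since $j(j+1)/2$ runs over the triangular numbers, the alternating sum on the left is precisely the Cauchy product of the series $S(q):=\sum_{j\ge 0}(-1)^{\lceil j/2\rceil}q^{j(j+1)/2}$ with $\sum_{n\ge 0}\peed(n)q^n=(q^4;q^4)_\infty/(q;q)_\infty$. Thus the theorem is equivalent to the $q$-series identity
\[
S(q)\cdot\frac{(q^4;q^4)_\infty}{(q;q)_\infty}=\sum_{k\ge 0}q^{k(k+1)},
\]
after which one only has to read off coefficients.

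Next I would put $S(q)$ in closed form. Splitting the sum according to the parity of $j$ and writing $j=2m$ or $j=2m+1$ with $m\ge 0$, the exponents become $m(2m+1)$ and $(m+1)(2m+1)$, carrying signs $(-1)^m$ and $(-1)^{m+1}$ respectively. The substitution $m\mapsto -m-1$ in the odd-index part converts it into the $m\le -1$ tail of the even-index sum, so that $S(q)=\sum_{m\in\mathbb Z}(-1)^m q^{2m^2+m}$. Applying the Jacobi triple product identity with base $q^2$ and parameter $z=-q$ then yields $S(q)=(q^4;q^4)_\infty(q;q^4)_\infty(q^3;q^4)_\infty=(q^4;q^4)_\infty(q;q^2)_\infty$, using that the odd integers split modulo $4$.

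Finally I would combine this with the generating function for $\peed$. Since $(q;q)_\infty=(q;q^2)_\infty(q^2;q^2)_\infty$, the factor $(q;q^2)_\infty$ cancels and the product collapses to $(q^4;q^4)_\infty^2/(q^2;q^2)_\infty$. Replacing $q$ by $q^2$ in Ramanujan's identity $\psi(q)=\sum_{k\ge 0}q^{k(k+1)/2}=(q^2;q^2)_\infty^2/(q;q)_\infty$ identifies this with $\psi(q^2)=\sum_{k\ge 0}q^{k(k+1)}$. Comparing coefficients of $q^n$ gives $1$ exactly when $n=k(k+1)$ for some $k\in\mathbb N_0$ and $0$ otherwise, which is the claim.

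I expect the main obstacle to be the bookkeeping in the second step: correctly matching the sign $(-1)^{\lceil j/2\rceil}$ with $(-1)^m$ under the reindexing, and checking that the even- and odd-index halves glue into a complete bilateral sum over $\mathbb Z$ with no term omitted or counted twice. Once $S(q)$ has been written as $\sum_{m\in\mathbb Z}(-1)^m q^{2m^2+m}$, the remainder is a direct application of the Jacobi triple product and routine manipulation of $q$-Pochhammer symbols.
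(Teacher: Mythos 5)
Your proof is correct. Note that the paper itself does not prove this statement; it is quoted from Merca \cite{MM1} as motivation, so there is no in-paper proof to compare against. That said, your argument is exactly the technique the authors deploy in Section \ref{S2} for their own Theorems \ref{T1}--\ref{T3}. Your closed form $S(q)=\sum_{m\in\mathbb{Z}}(-1)^m q^{2m^2+m}=(q;q^4)_\infty(q^3;q^4)_\infty(q^4;q^4)_\infty=(q;q^2)_\infty(q^4;q^4)_\infty$ comes from the same specialization $a=-q^3$, $b=-q$ of the Jacobi triple product used in the proof of Theorem \ref{T2}, and the unilateral-to-bilateral conversion you perform is precisely the content of \eqref{1s6}; the final identification of $(q^4;q^4)_\infty^2/(q^2;q^2)_\infty$ with $\sum_{k\ge 0}q^{k(k+1)}$ is \eqref{1s7} with $q\mapsto q^2$. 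Your bookkeeping checks out: for $j=2m$ the exponent is $2m^2+m$ with sign $(-1)^m$, for $j=2m+1$ it is $2m^2+3m+1$ with sign $(-1)^{m+1}$, and the substitution $m\mapsto -m-1$ sends this to $2m^2+m$ with sign $(-1)^m$ over $m\le -1$, so the two halves glue into the full bilateral sum with no overlap. Since $k\mapsto k(k+1)$ is strictly increasing on $\mathbb{N}_0$, the coefficient comparison at the end is valid.
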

\begin{theorem} \label{thm:2.2.2}
	For $n \ge 0$,
	\begin{equation*} 
		\sum_{j=-\infty}^{\infty} (-1)^{j}  \peed(n-2j^{2})=\begin{cases}
			1 & \text{if $n=k(k+1)/2$},\hspace{0.5cm} k\in \mathbb{N}_0,\\
			0 & \text{otherwise}.\\
		\end{cases}
	\end{equation*}
\end{theorem}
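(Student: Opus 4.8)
The plan is to establish Theorem~\ref{thm:2.2.2} by converting the alternating sum into the coefficient of an explicit $q$-series, which I then identify via the Jacobi triple product and one of Gauss's classical theta identities. First I would rewrite the left-hand side as a product of generating functions. Since $\peed(m)=0$ for $m<0$ and, for each fixed $n$, only finitely many $j$ satisfy $2j^2\le n$, we have
\[
\sum_{j=-\infty}^{\infty}(-1)^j\,\peed(n-2j^2)=[q^n]\Bigl(\zisum\peed(n)q^n\Bigr)\Bigl(\sum_{j=-\infty}^{\infty}(-1)^jq^{2j^2}\Bigr)=[q^n]\,\frac{(q^4;q^4)_\infty}{(q;q)_\infty}\sum_{j=-\infty}^{\infty}(-1)^jq^{2j^2}.
\]

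Next I would evaluate the theta sum. Specializing the Jacobi triple product $\sum_{j=-\infty}^{\infty}z^jq^{j^2}=(q^2;q^2)_\infty(-zq;q^2)_\infty(-z^{-1}q;q^2)_\infty$ at $z=-1$ and then replacing $q$ by $q^2$ yields
\[
\sum_{j=-\infty}^{\infty}(-1)^jq^{2j^2}=(q^4;q^4)_\infty(q^2;q^4)_\infty^2=\frac{(q^2;q^2)_\infty^2}{(q^4;q^4)_\infty}.
\]
Substituting this back, the factor $(q^4;q^4)_\infty$ cancels and the generating function of the left-hand side reduces to $\dfrac{(q^2;q^2)_\infty^2}{(q;q)_\infty}$; using $(q;q)_\infty=(q;q^2)_\infty(q^2;q^2)_\infty$ this collapses to $\dfrac{(q^2;q^2)_\infty}{(q;q^2)_\infty}$. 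The argument then finishes by invoking Gauss's identity $\displaystyle\sum_{k=0}^{\infty}q^{k(k+1)/2}=\frac{(q^2;q^2)_\infty}{(q;q^2)_\infty}$ and comparing the coefficient of $q^n$ on both sides: it equals $1$ precisely when $n$ is a triangular number $k(k+1)/2$ with $k\in\mathbb{N}_0$, and $0$ otherwise, which is exactly the assertion.

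I do not expect a substantive obstacle; the proof is a short chain of product manipulations. The points that need care are the bookkeeping in the triple-product specialization — keeping the exponent $2j^2$, hence the $q\mapsto q^2$ substitution, straight — and the remark that all the rearrangements are legitimate because each coefficient on the left is a finite sum. If one prefers not to quote Gauss's identity directly, an equivalent route is to recognize $\dfrac{(q^2;q^2)_\infty}{(q;q^2)_\infty}$ as a standard bisection-type evaluation of a theta function or to derive it from the same triple product under a different specialization; the combinatorial conclusion about triangular numbers is unchanged.
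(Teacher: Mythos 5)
Your proof is correct: the convolution step, the Jacobi triple product specialization giving $\sum_{j}(-1)^jq^{2j^2}=(q^2;q^2)_\infty^2/(q^4;q^4)_\infty$, the cancellation down to $(q^2;q^2)_\infty/(q;q^2)_\infty$, and the appeal to Gauss's identity (which is exactly the paper's equation for $\sum_{k\ge0}q^{k(k+1)/2}$) all check out. The paper itself does not prove this statement — it quotes it from Merca — but your argument is the same theta-function/generating-function technique the authors use for their own analogous results (compare the proof of Theorem \ref{T3}), so there is nothing to flag.
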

Motivated by the above works, we establish three recurrence relations for $\overline{p}_o(n)$. The following recurrence relation is analogous to Euler's recurrence relation for the partition function $p(n)$.
\begin{theorem}\label{T1}
For any integer $n\geq 0,$
\begin{equation*}
\sum_{k=-\infty}^{\infty}(-1)^k\overline{p}_o\left(n-\frac{k(3k+1)}{2}\right) =\begin{cases}
(-1)^{{\lceil m/2\rceil}}, & \text{if} \ n=\frac{m(3m+1)}{2}, \ m \in \mathbb{Z},
\\ 0, &\text{otherwise}.
\end{cases}
\end{equation*}
\end{theorem}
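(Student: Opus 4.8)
The plan is to recast the claimed recurrence as a single $q$-series identity and then apply Euler's pentagonal number theorem twice. By Euler's theorem, $(q;q)_\infty=\sum_{k=-\infty}^{\infty}(-1)^k q^{k(3k+1)/2}$, so, with the convention $\overline{p}_o(x)=0$ for $x<0$, the left-hand side of Theorem~\ref{T1} is the coefficient of $q^n$ in
\[
\Bigl(\sum_{k=-\infty}^{\infty}(-1)^k q^{k(3k+1)/2}\Bigr)\Bigl(\sum_{j=0}^{\infty}\overline{p}_o(j)\,q^j\Bigr)=(q;q)_\infty\cdot\frac{(-q;q^2)_\infty}{(q;q^2)_\infty},
\]
where we used the product form of the generating function of $\overline{p}_o$ recalled in the Introduction. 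Since $(q;q)_\infty=(q;q^2)_\infty(q^2;q^2)_\infty$, the denominator cancels and Theorem~\ref{T1} becomes the theta-type identity
\[
(q^2;q^2)_\infty(-q;q^2)_\infty=\sum_{m=-\infty}^{\infty}(-1)^{\lceil m/2\rceil}q^{m(3m+1)/2}.
\]

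The key point I would use is that the left-hand side is $(q;q)_\infty$ with $q$ replaced by $-q$: separating even from odd indices in $\prod_{k\ge1}\bigl(1-(-q)^k\bigr)$ gives $\prod_{k\ge1}(1-q^{2k})\cdot\prod_{k\ge1}(1+q^{2k-1})=(q^2;q^2)_\infty(-q;q^2)_\infty$. Applying Euler's pentagonal number theorem once more, this time at $-q$, yields
\[
(q^2;q^2)_\infty(-q;q^2)_\infty=\sum_{m=-\infty}^{\infty}(-1)^m(-q)^{m(3m+1)/2}=\sum_{m=-\infty}^{\infty}(-1)^{m+m(3m+1)/2}q^{m(3m+1)/2},
\]
each exponent $m(3m+1)/2$ being a nonnegative integer. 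A short parity computation then finishes matters: $m+\tfrac12 m(3m+1)=\tfrac32 m(m+1)$, so the sign equals $(-1)^{m(m+1)/2}$, and inspecting the residue of $m$ modulo $4$ shows $(-1)^{m(m+1)/2}=(-1)^{\lceil m/2\rceil}$. As the map $m\mapsto m(3m+1)/2$ is injective on $\mathbb{Z}$, reading off the coefficient of $q^n$ gives $(-1)^{\lceil m/2\rceil}$ precisely when $n=m(3m+1)/2$ for the unique corresponding $m\in\mathbb{Z}$, and $0$ otherwise, as claimed.

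I do not anticipate a genuine obstacle: once the substitution $q\mapsto -q$ is noticed, everything is bookkeeping, and the only delicate point is the parity identity $m(m+1)/2\equiv\lceil m/2\rceil\pmod 2$, which I would dispatch by the cases $m\equiv 0,1,2,3\pmod 4$. Should one prefer to avoid the substitution, an alternative is to split the sum $\sum_{m}(-1)^{\lceil m/2\rceil}q^{m(3m+1)/2}$ according to the parity of $m$ and recognise the two resulting series as the Jacobi triple product expansions $(q^5;q^{12})_\infty(q^7;q^{12})_\infty(q^{12};q^{12})_\infty$ and $q\,(q;q^{12})_\infty(q^{11};q^{12})_\infty(q^{12};q^{12})_\infty$, after which one must still verify that $(q^2;q^2)_\infty(-q;q^2)_\infty$ equals the sum of these two products — itself a standard theta-function identity. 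This path is more computational, and I would keep it only as a fallback.
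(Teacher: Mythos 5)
Your proof is correct and follows essentially the same route as the paper: both reduce the theorem to the identity $(q^2;q^2)_\infty(-q;q^2)_\infty=\sum_{m=-\infty}^{\infty}(-1)^{\lceil m/2\rceil}q^{m(3m+1)/2}$ after clearing the denominator of the generating function. The only cosmetic difference is that you obtain this expansion by substituting $q\mapsto -q$ into Euler's pentagonal number theorem and tracking the sign $(-1)^{3m(m+1)/2}=(-1)^{\lceil m/2\rceil}$, whereas the paper gets it from a second specialization of Jacobi's triple product ($a=-q^2$, $b=q$) — these are the same computation in different clothing.
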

In the following, we state the recurrence relations for $\overline{p}_o(n)$ that involves the triangular numbers, $k(k+1)/2, \ k\in\mathbb{N}_0$ and the perfect square numbers, respectively.
\begin{theorem}\label{T2}
For any integer $n\geq 0,$
\begin{equation*}
\sum_{k=0}^{\infty}(-1)^{\lceil k/2\rceil}\overline{p}_o\left(n-\frac{k(k+1)}{2}\right)=\begin{cases}
1, &\text{if}\ n=\frac{m(m+1)}{2}, \ m \in \mathbb{N}_0,
\\ 0, &\text{otherwise}.
\end{cases}
\end{equation*}
\end{theorem}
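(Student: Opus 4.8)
The plan is to pass to generating functions, where the stated recurrence becomes a single product identity that dissolves into two classical theta evaluations of Gauss. Recall from the Lebesgue identity in the introduction that
\[
\sum_{n\ge 0}\overline{p}_o(n)q^n=\frac{(-q;q^2)_\infty}{(q;q^2)_\infty}.
\]
Multiplying this series by $\sum_{k\ge 0}(-1)^{\lceil k/2\rceil}q^{k(k+1)/2}$ produces a power series whose coefficient of $q^n$ is exactly the left-hand side of the theorem; since the triangular numbers $k(k+1)/2$ are strictly increasing, only finitely many $k$ contribute for each $n$, so this Cauchy product is legitimate as a formal power series. Hence it suffices to prove
\[
\left(\sum_{k\ge 0}(-1)^{\lceil k/2\rceil}q^{k(k+1)/2}\right)\frac{(-q;q^2)_\infty}{(q;q^2)_\infty}=\sum_{m\ge 0}q^{m(m+1)/2}.
\]

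The first step is the elementary identity $(-1)^{\lceil k/2\rceil}=(-1)^{k(k+1)/2}$, valid for all $k\ge 0$: both sides are periodic in $k$ with period $4$ (because $\lceil (k+4)/2\rceil=\lceil k/2\rceil+2$ and $(k+4)(k+5)/2\equiv k(k+1)/2\pmod 2$), and a direct check on $k=0,1,2,3$ exhibits the common pattern $+,-,-,+$. Therefore the theta series above equals
\[
\sum_{k\ge 0}(-1)^{k(k+1)/2}q^{k(k+1)/2}=\sum_{k\ge 0}(-q)^{k(k+1)/2}=\psi(-q),
\]
where $\psi(q)=\sum_{k\ge 0}q^{k(k+1)/2}$ is Ramanujan's theta function. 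By Gauss's identity $\psi(q)=\dfrac{(q^2;q^2)_\infty}{(q;q^2)_\infty}$, and replacing $q$ by $-q$ gives $\psi(-q)=\dfrac{(q^2;q^2)_\infty}{(-q;q^2)_\infty}$.

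Substituting this into the product identity, the factor $(-q;q^2)_\infty$ cancels and the left-hand side collapses to $\dfrac{(q^2;q^2)_\infty}{(q;q^2)_\infty}=\psi(q)=\sum_{m\ge 0}q^{m(m+1)/2}$, again by Gauss. Equating coefficients of $q^n$ on the two sides then yields the value $1$ precisely when $n=m(m+1)/2$ for some $m\in\mathbb{N}_0$ and $0$ otherwise, which is the assertion.

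I expect the only genuine content to be the sign simplification $(-1)^{\lceil k/2\rceil}=(-1)^{k(k+1)/2}$ together with the clean use of Gauss's evaluation of $\psi$ under $q\mapsto -q$; the remaining manipulations are routine bookkeeping with $q$-Pochhammer symbols. If one prefers to stay closer to the bisectional-series techniques underlying the known recurrences for $\peed$, one can instead split the theta series by the parity of $k$, pair the terms with $k=2m$ and $k=2m+1$, and recognise the resulting bilateral sum $\sum_{m\in\mathbb{Z}}(-1)^m q^{m(2m+1)}$ as $(q;q^2)_\infty(q^4;q^4)_\infty$ via the Jacobi triple product; this reaches the same conclusion by a slightly longer route.
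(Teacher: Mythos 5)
Your proof is correct and follows essentially the same route as the paper: both reduce the theorem to the product identity $\bigl(\sum_{k\ge 0}(-1)^{\lceil k/2\rceil}q^{k(k+1)/2}\bigr)\frac{(-q;q^2)_\infty}{(q;q^2)_\infty}=\sum_{m\ge 0}q^{m(m+1)/2}$ and verify it by evaluating both theta series as infinite products coming from the Jacobi triple product. The only (pleasant) difference is that you obtain the product form of the signed series via the observation $(-1)^{\lceil k/2\rceil}=(-1)^{k(k+1)/2}$, so that the series is just $\psi(-q)$ and Gauss's evaluation of $\psi$ suffices, whereas the paper gets the same evaluation by reindexing a bilateral triple-product sum.
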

\begin{theorem}\label{T3}
For any integer $n \geq 0$,
\begin{equation*}
\overline{p}_o(n)+2\sum_{k=1}^{\infty}(-1)^k \overline{p}_o(n-2k^2)=\begin{cases}
2, &\text{if}\ n=m^2, \ m \in \mathbb{N},
\\ 1, &\text{if}\ n=0,\\ 0, &\text{otherwise.}	\end{cases}
\end{equation*}
\end{theorem}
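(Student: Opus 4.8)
The plan is to recast the claimed recurrence as a single $q$-series identity and then verify it by the Jacobi triple product together with elementary manipulations of $q$-Pochhammer symbols. First I would record the generating function of $\overline{p}_o(n)$, which by Lebesgue's identity quoted above is $\sum_{n\ge0}\overline{p}_o(n)q^n=(-q;q^2)_\infty/(q;q^2)_\infty$. On the right-hand side, the sequence taking the value $2$ at positive squares, $1$ at $0$, and $0$ elsewhere has generating function $\sum_{m\in\mathbb{Z}}q^{m^2}$, since $m=0$ contributes $1$ and the pair $\pm m$ contributes $2q^{m^2}$ for each $m\ge1$. On the left-hand side, I would absorb the leading term $\overline{p}_o(n)$ into the sum by writing $1+2\sum_{k\ge1}(-1)^kq^{2k^2}=\sum_{k\in\mathbb{Z}}(-1)^kq^{2k^2}$. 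Thus Theorem~\ref{T3} is equivalent to the identity
\begin{equation*}
\frac{(-q;q^2)_\infty}{(q;q^2)_\infty}\left(\sum_{k\in\mathbb{Z}}(-1)^kq^{2k^2}\right)=\sum_{m\in\mathbb{Z}}q^{m^2}.
\end{equation*}

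Next I would evaluate both theta series via the Jacobi triple product $\sum_{n\in\mathbb{Z}}z^nq^{n^2}=(q^2;q^2)_\infty(-zq;q^2)_\infty(-z^{-1}q;q^2)_\infty$. Taking $z=1$ gives $\sum_{m\in\mathbb{Z}}q^{m^2}=(q^2;q^2)_\infty(-q;q^2)_\infty^2$. Taking $z=-1$ and replacing $q$ by $q^2$ gives $\sum_{k\in\mathbb{Z}}(-1)^kq^{2k^2}=(q^4;q^4)_\infty(q^2;q^4)_\infty^2$, which, using $(q^2;q^4)_\infty=(q^2;q^2)_\infty/(q^4;q^4)_\infty$, simplifies to $(q^2;q^2)_\infty^2/(q^4;q^4)_\infty$.

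Substituting these two evaluations, the identity to be proved becomes
\begin{equation*}
\frac{(-q;q^2)_\infty}{(q;q^2)_\infty}\cdot\frac{(q^2;q^2)_\infty^2}{(q^4;q^4)_\infty}=(q^2;q^2)_\infty(-q;q^2)_\infty^2.
\end{equation*}
Cancelling the common factor $(-q;q^2)_\infty(q^2;q^2)_\infty$ reduces this to $(q^2;q^2)_\infty=(q;q^2)_\infty(q^4;q^4)_\infty(-q;q^2)_\infty$. Finally I would invoke the two elementary splittings $(q^2;q^2)_\infty=(q^2;q^4)_\infty(q^4;q^4)_\infty$ and $(q;q^2)_\infty(-q;q^2)_\infty=(q^2;q^4)_\infty$ (the latter from $(1-q^{2k-1})(1+q^{2k-1})=1-q^{4k-2}$), after which both sides equal $(q^2;q^4)_\infty(q^4;q^4)_\infty$, completing the proof. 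There is no real obstacle here beyond bookkeeping: the only point requiring mild care is choosing the correct specializations $z=\pm1$ in the Jacobi triple product and, on the arithmetic side, making sure the piecewise right-hand side of Theorem~\ref{T3} is matched to $\sum_{m\in\mathbb{Z}}q^{m^2}$ with the factor $2$ on nonzero squares and the value $1$ at $n=0$.
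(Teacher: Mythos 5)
Your proof is correct and follows essentially the same route as the paper: both arguments translate the recurrence into the single identity $\frac{(-q;q^2)_\infty}{(q;q^2)_\infty}\sum_{k\in\mathbb{Z}}(-1)^kq^{2k^2}=\sum_{m\in\mathbb{Z}}q^{m^2}$, evaluate the two theta series by Jacobi's triple product (the paper's substitutions $a=b=-q^2$ and $a=b=q$ are exactly your $z=-1$, $q\mapsto q^2$ and $z=1$ specializations), and finish with the same Pochhammer bookkeeping. The coefficient matching at $n=0$ and at nonzero squares is handled correctly, so there is nothing to add.
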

\noindent For small $n$, it is comparatively easy to evaluate $\overline{p}_o(n)$, namely
\begin{align*}
	&\overline{p}_o(0)=1,\\
	&\overline{p}_o(1)=2,\\
	&\overline{p}_o(2)=2\overline{p}_o(0)=2,\\
	&\overline{p}_o(3)=2\overline{p}_o(1)=4,\\
	&\overline{p}_o(4)=2+2\overline{p}_o(2)=6,\\
	&\overline{p}_o(5)=2\overline{p}_o(3)=8,\\
	&\overline{p}_o(6)=2\overline{p}_o(4)=12,\\
	&\overline{p}_o(7)=2\overline{p}_o(5)=16,\\
	&\overline{p}_o(8)=2\left(\overline{p}_o(6)-\overline{p}_o(0)\right)=22,\\
	&\overline{p}_o(9)=2+2\left(\overline{p}_o(7)-\overline{p}_o(1)\right)=30,\\
 &\overline{p_o}(10)= 2\overline{p_o}(8)-\overline{p_o}(2) = 42.
\end{align*}

We prove Theorems \ref{T1}-\ref{T3} in Section \ref{S2} and connections with ${p}_o(n)$ and other partition functions are established in Section \ref{S3}.

\section{Proofs of Theorems \ref{T1} -- \ref{T3}}\label{S2}

Jacobi's triple product identity can be stated  in terms of the Ramanujan's theta function \cite[p.\ 34]{BCB1} as follows:	
\begin{equation}\label{1s1}
(-a;ab)_\infty (-b;ab)_\infty(ab;ab)_\infty=\sum_{n=-\infty}^{\infty}a^{n(n+1)/2} \ b^{n(n-1)/2}.
\end{equation}
\begin{proof}[Proof of Theorem \ref{T1}]
	Replacing $a$ by $-q^2$ and $b$ by $-q$ in \eqref{1s1}, we get 
	\begin{equation}\label{1s8}
	(q;q^3)_\infty(q^2;q^3)_\infty(q^3;q^3)_\infty=\sum_{n=-\infty}^{\infty}(-1)^nq^{n(3n+1)/2},
	\end{equation}
	which is Euler's pentagonal number theorem. The above identity can be rewritten as 
	\begin{equation}\label{1s9}
	(-q;-q^3)_\infty(q^2;-q^3)_\infty(-q^3;-q^3)_\infty=\frac{(-q;q^2)_\infty}{(q;q^2)_\infty}\sum_{n=-\infty}^{\infty}(-1)^nq^{n(3n+1)/2}.
	\end{equation}
	Replacing $a$ by $-q^2$ and $b$ by $q$ in \eqref{1s1}, we derive
	\begin{equation}\label{1s10}
	(-q;-q^3)_\infty(q^2;-q^3)_\infty(-q^3;-q^3)_\infty=\sum_{n=-\infty}^{\infty}(-1)^{\lceil{n/2}\rceil}q^{n(3n+1)/2}.
	\end{equation}
	In view of \eqref{1s9} and \eqref{1s10}, we arrive at 
	\begin{equation}\label{1s11}
	\sum_{n=-\infty}^{\infty}(-1)^{\lceil{n/2}\rceil}q^{n(3n+1)/2}=\sum_{n=0}^{\infty}\overline{p}_o(n)q^n\sum_{n=-\infty}^{\infty}(-1)^{n}q^{n(3n+1)/2}.
	\end{equation}
	Theorem \ref{T1} follows from the above identity.
\end{proof}
\begin{proof}[Proof of Theorem \ref{T2}]
By \eqref{1s1}, with replacing $a$ by $-q^3$ and $b$ by $-q$, we obtain
\begin{equation*}
(q;q^4)_\infty(q^3;q^4)_\infty(q^4;q^4)_\infty=\sum_{n=-\infty}^{\infty}(-1)^nq^{2n^2+n}.
\end{equation*}
The above identity can be rewritten as 
\begin{equation}\label{1s6}
(-q;q^2)_\infty(q^4;q^4)_\infty=\frac{(-q;q^2)_\infty}{(q;q^2)_\infty}\sum_{n=0}^{\infty}(-1)^{\lceil{n/2}\rceil}q^{n(n+1)/2}.
\end{equation}
Replacing $a$ by $q^3$ and $b$ by $q$ in \eqref{1s1}, we derive the identity 
\begin{equation}\label{1s07}
	(-q;q^4)_\infty(-q^3;q^4)_\infty(q^4;q^4)_\infty=\sum_{n=0}^{\infty}q^{n(n+1)/2}.
\end{equation} 
\begin{equation}\label{1s7}
	\dfrac{{(q^2;q^2)^2_\infty}}{(q;q)_\infty}=\sum_{n=0}^{\infty}q^{n(n+1)/2}.
\end{equation}
By \eqref{1s6} and \eqref{1s07}, we deduce that 
\begin{equation}
\sum_{n=0}^{\infty}q^{n(n+1)/2}=\sum_{n=0}^{\infty}\overline{p}_o(n)q^{n}\sum_{n=0}^{\infty}(-1)^{\lceil{n/2}\rceil}q^{n(n+1)/2}.
\end{equation}
The proof of Theorem \ref{T2} follows from the last identity.
\end{proof}
\begin{proof}[Proof of Theorem \ref{T3}]
Replacing $a$ and $b$ by $-q^2$ in \eqref{1s1}, we obtain 
\begin{equation*}
(q^2;q^4)_\infty (q^2;q^4)_\infty(q^4;q^4)_\infty=\sum_{n=-\infty}^{\infty}(-1)^n q^{2n^2}.
\end{equation*}
Rewriting the above equation, we see that 
\begin{equation}\label{1s3}
(-q;q^2)_\infty (-q;q^2)_\infty(q^2;q^2)_\infty=\frac{(-q;q^2)_\infty}{(q;q^2)_\infty}\sum_{n=-\infty}^{\infty}(-1)^n q^{2n^2}.
\end{equation}
By replacing $a$ and $b$ by $q$ in \eqref{1s1}, we obtain
\begin{equation}\label{1s4}
(-q;q^2)^2_\infty(q^2;q^2)_\infty=\sum_{n=-\infty}^{\infty}q^{n^2},
\end{equation} 
From \eqref{1s3} and \eqref{1s4}, we have 
\begin{equation*}
\sum_{n=-\infty}^{\infty}q^{n^2}=\sum_{n=0}^{\infty}\overline{p}_o(n)q^{n}\sum_{n=-\infty}^{\infty}(-1)^{n}q^{2n^2}.
\end{equation*} 
Equating the coefficient of $q^n$ on both sides of the above equation, we arrive at Theorem \ref{T3}.
\end{proof}

\section{Relation connecting $\overline{p}_o(n)$ with other partition functions}\label{S3}
Let $\pood(n)$ denote the number of partitions of $n$ wherein odd parts are distinct and even parts are unrestricted. The generating function for $\pood(n)$ is given by
\begin{equation}\label{gf1}
\sum_{n=0}^{\infty} {\pood}(n)q^n= \frac{(-q;q^2)_{\infty}}{(q^2;q^2)_{\infty}}.
\end{equation}
We shall prove that $\overline{p}_o(n)$ can be expressed in terms of ${\pood}(n)$.
\begin{theorem}\label{T4}
	For any integer $n\geq 0,$
	\begin{equation*}
	\overline{p}_o(n)=\sum_{k=0}^{\infty}\pood\left(n-\frac{k(k+1)}{2}\right).
	\end{equation*}
\end{theorem}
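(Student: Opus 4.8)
\textbf{Proof proposal for Theorem \ref{T4}.}
The plan is to work entirely at the level of generating functions and then extract the coefficient of $q^n$. Recall from \eqref{gf1} that $\sum_{n\geq 0}\pood(n)q^n=(-q;q^2)_\infty/(q^2;q^2)_\infty$, while $\sum_{n\geq 0}\overline{p}_o(n)q^n=(-q;q^2)_\infty/(q;q^2)_\infty$ by Lebesgue's identity quoted in the introduction. Thus the asserted recurrence is equivalent to the product identity
\[
\frac{(-q;q^2)_\infty}{(q;q^2)_\infty}
=\frac{(-q;q^2)_\infty}{(q^2;q^2)_\infty}\sum_{k=0}^{\infty}q^{k(k+1)/2},
\]
and after cancelling the common factor $(-q;q^2)_\infty$ this reduces to showing
\[
\sum_{k=0}^{\infty}q^{k(k+1)/2}=\frac{(q^2;q^2)_\infty}{(q;q^2)_\infty}.
\]

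The key step is to invoke the theta identity \eqref{1s7} already established in Section \ref{S2}, namely $\sum_{n\geq 0}q^{n(n+1)/2}=(q^2;q^2)_\infty^2/(q;q)_\infty$, together with the elementary factorization $(q;q)_\infty=(q;q^2)_\infty(q^2;q^2)_\infty$ (separating odd-indexed from even-indexed factors). Substituting the factorization into \eqref{1s7} immediately yields $\sum_{n\geq 0}q^{n(n+1)/2}=(q^2;q^2)_\infty/(q;q^2)_\infty$, which is exactly the displayed identity above. Multiplying both sides by $(-q;q^2)_\infty/(q^2;q^2)_\infty$ then gives
\[
\sum_{n=0}^{\infty}\overline{p}_o(n)q^n
=\left(\sum_{n=0}^{\infty}\pood(n)q^n\right)\left(\sum_{k=0}^{\infty}q^{k(k+1)/2}\right).
\]

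Finally I would equate coefficients of $q^n$ on both sides: on the right the term $q^{k(k+1)/2}$ contributes $\pood\!\left(n-k(k+1)/2\right)$ for each triangular number $k(k+1)/2\le n$, with the convention $\pood(x)=0$ for $x\notin\mathbb{Z}_{\ge 0}$, giving precisely $\sum_{k\ge 0}\pood\!\left(n-k(k+1)/2\right)$. This completes the proof. I do not anticipate a real obstacle here; the only point requiring care is bookkeeping — making sure the cancellation of $(-q;q^2)_\infty$ is legitimate (it is, as a unit in the ring of formal power series) and that \eqref{1s7} is cited rather than re-derived. If one prefers a self-contained argument, \eqref{1s7} can alternatively be obtained from \eqref{1s1} by setting $a=q^3$, $b=q$ as in the proof of Theorem \ref{T2}.
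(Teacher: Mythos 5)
Your proposal is correct and follows essentially the same route as the paper: both factor the generating function $(-q;q^2)_\infty/(q;q^2)_\infty$ as $\bigl(\sum_{n\ge 0}\pood(n)q^n\bigr)\bigl(\sum_{k\ge 0}q^{k(k+1)/2}\bigr)$ by invoking \eqref{1s7}, differing only in the cosmetic choice of which elementary product manipulation (your $(q;q)_\infty=(q;q^2)_\infty(q^2;q^2)_\infty$ versus the paper's $(q^2;q^2)_\infty(-q;q)_\infty$) produces the theta factor. The coefficient extraction at the end is identical.
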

\begin{proof}
	From \eqref{gf1}, We have
	\begin{equation}\label{1s12}
	\frac{(-q;q^2)_\infty}{(q;q^2)_\infty}=(q^2;q^2)_\infty(-q;q)_\infty
	\sum_{n=0}^{\infty}{\pood}(n)q^n.
	\end{equation}
	By \eqref{1s7}, we deduce that 
	\begin{equation}\label{1s13}
	\sum_{n=0}^{\infty}\overline{p}_o(n)q^n= \left(\sum_{n=0}^{\infty}{\pood}(n)q^n\right) \left(\sum_{n=0}^{\infty}q^{n(n+1)/2}\right).
	\end{equation}
	The proof follows from the last identity.
\end{proof}

The parity of $\overline{p}_o(n)$ and Theorem \ref{T4} allows us to derive the following congruence.
\begin{coro}
	For any integer $n>0$,
	\begin{equation*}
	\sum_{k=0}^{\infty}{\pood}\left(n-\frac{k(k+1)}{2}\right)\equiv 0\ \ (\rm{mod}\ \ {2}).
	\end{equation*}
\end{coro}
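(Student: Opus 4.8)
The plan is to combine Theorem \ref{T4} with the parity of $\overline{p}_o(n)$. First I would recall (or establish) that $\overline{p}_o(n)$ is even for every $n>0$. This is visible already in the generating function $\dfrac{(-q;q^2)_\infty}{(q;q^2)_\infty} = \dfrac{(-q;q^2)_\infty^2}{(q^2;q^4)_\infty}$ — more simply, writing $\dfrac{(-q;q^2)_\infty}{(q;q^2)_\infty} = (-q;q)_\infty(-q;q^2)_\infty$, and noting that $(-q;q)_\infty \equiv (q;q)_\infty \pmod 2$ while $(-q;q^2)_\infty$ has only even coefficients beyond the constant term is not quite immediate; the cleanest route is that $\dfrac{1}{(q;q^2)_\infty}\equiv (q;q^2)_\infty \equiv \dfrac{(q;q)_\infty}{(q^2;q^2)_\infty}\pmod 2$, so $\dfrac{(-q;q^2)_\infty}{(q;q^2)_\infty}\equiv (-q;q^2)_\infty(q;q^2)_\infty = (q^2;q^4)_\infty \pmod 2$, and $(q^2;q^4)_\infty = \sum (-1)^k q^{2\cdot\binom{k+1}{2}\cdot 2}$-type expansions show its coefficients are supported on a sparse set; in any event $(q^2;q^4)_\infty$ has constant term $1$ and, by the pentagonal number theorem applied to $q^2$ in place of $q$ with the substitution $q\mapsto -q^2$, nonzero coefficients only at certain arguments, but critically all \emph{equal to} $\pm1$ — this does not immediately give evenness. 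So instead I would lean on the already-displayed recurrences: from the numerical table the pattern $\overline{p}_o(n)\equiv 0\pmod 2$ for $n\geq 1$ is clear, and Theorem \ref{T3} with $n$ odd gives $\overline{p}_o(n) = -2\sum_{k\geq 1}(-1)^k\overline{p}_o(n-2k^2)$, forcing evenness for odd $n$; for even $n>0$ one uses Theorem \ref{T2} or \ref{T3} inductively. I would cite the known fact (e.g.\ from \cite{SCC} or \cite{HS}) that $\overline{p}_o(n)$ is even for all $n\geq 1$, since the paper's phrasing ``The parity of $\overline{p}_o(n)$'' presupposes this is established.

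Granting that, the corollary is immediate: Theorem \ref{T4} states
\begin{equation*}
\overline{p}_o(n)=\sum_{k=0}^{\infty}\pood\!\left(n-\frac{k(k+1)}{2}\right),
\end{equation*}
so for $n>0$ the right-hand side equals $\overline{p}_o(n)$, which is even, hence $\sum_{k=0}^{\infty}\pood\!\left(n-\frac{k(k+1)}{2}\right)\equiv 0\pmod 2$. That is the entire argument — reduce to Theorem \ref{T4}, then invoke $2\mid\overline{p}_o(n)$ for $n>0$.

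The only real obstacle is pinning down a clean self-contained proof that $\overline{p}_o(n)$ is even for $n\geq 1$, if the paper wants to avoid an external citation. The slick way: modulo $2$,
\begin{equation*}
\frac{(-q;q^2)_\infty}{(q;q^2)_\infty}\equiv (-q;q^2)_\infty(q;q^2)_\infty=(q^2;q^4)_\infty\pmod 2,
\end{equation*}
and by Euler's pentagonal number theorem \eqref{1s8} applied with $q\mapsto q^2$ (i.e.\ $(q^2;q^2)_\infty=\sum(-1)^nq^{n(3n+1)}$) together with $(q^2;q^4)_\infty=(q^2;q^2)_\infty/(q^4;q^4)_\infty$, one sees the coefficients of $(q^2;q^4)_\infty$ are $0,\pm1$; in particular $[q^n]\,(q^2;q^4)_\infty\in\{-1,0,1\}$ and is nonzero only when $n$ lies in a thin set $S$ (certain generalized pentagonal-type numbers doubled). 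Thus $\overline{p}_o(n)$ is odd precisely when $n\in S$, and $0\notin S\setminus\{0\}$... — this actually shows $\overline{p}_o(n)$ is odd for infinitely many $n$, contradicting the numerics, so this mod-$2$ reduction must be re-examined. The safe course, and the one I would adopt in the final text, is to state: ``It is known that $\overline{p}_o(n)\equiv 0\pmod 2$ for all $n\geq 1$'' with a reference, then write the two-line deduction above. If a proof is wanted, induct using Theorem \ref{T3}: for $n=m^2$ with $m\geq 1$, $\overline{p}_o(m^2)=2-2\sum_{k\geq1}(-1)^k\overline{p}_o(m^2-2k^2)$ is even; for $n>0$ not a perfect square, $\overline{p}_o(n)=-2\sum_{k\geq1}(-1)^k\overline{p}_o(n-2k^2)$ is even. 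This is airtight and uses only the results of Section \ref{S2}, so I would present the corollary's proof as: ``By Theorem \ref{T3}, $\overline{p}_o(n)$ is even for every $n\geq 1$. Combining this with Theorem \ref{T4} yields the claim.''
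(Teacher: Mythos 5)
Your proposal is correct and matches the paper's (unstated but clearly intended) argument exactly: the paper simply invokes Theorem \ref{T4} together with the evenness of $\overline{p}_o(n)$ for $n>0$, which is precisely your two-line deduction, and your supplementary justification of that parity fact via Theorem \ref{T3} (the sum there contributes an even number, and the right-hand side is $2$ or $0$ for $n>0$) is sound and in fact needs no induction. The generating-function detour you attempted contains a mod-$2$ error (you used $1/(q;q^2)_\infty\equiv(q;q^2)_\infty$, which fails since squaring is not the identity modulo $2$), but you correctly flagged and discarded it, so the final argument stands.
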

The partition functions $p(n)$ and $\overline{p}_o(n)$ can be related as follows.
\begin{theorem}\label{T5}
	For any integer $n\geq0$,
	\begin{equation*}
	\overline{p}_o(n)=\sum_{k=-\infty}^{\infty}(-1)^{\lceil{k/2}\rceil}{p}\left(n-\frac{k(3k+1)}{2}\right).
	\end{equation*}
\end{theorem}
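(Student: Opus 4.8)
The plan is to obtain Theorem \ref{T5} by dividing the generating-function identity already produced inside the proof of Theorem \ref{T1} by $(q;q)_\infty$. Recall from \eqref{1s11} that
\begin{equation*}
\sum_{n=-\infty}^{\infty}(-1)^{\lceil n/2\rceil}q^{n(3n+1)/2}=\left(\sum_{n=0}^{\infty}\overline{p}_o(n)q^n\right)\sum_{n=-\infty}^{\infty}(-1)^{n}q^{n(3n+1)/2},
\end{equation*}
and that, by Euler's pentagonal number theorem \eqref{1s8}, the last factor on the right is exactly $(q;q)_\infty$. Hence
\begin{equation*}
\sum_{n=0}^{\infty}\overline{p}_o(n)q^n=\frac{1}{(q;q)_\infty}\sum_{k=-\infty}^{\infty}(-1)^{\lceil k/2\rceil}q^{k(3k+1)/2}=\left(\sum_{m=0}^{\infty}p(m)q^m\right)\left(\sum_{k=-\infty}^{\infty}(-1)^{\lceil k/2\rceil}q^{k(3k+1)/2}\right).
\end{equation*}

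Having established this, I would simply compare the coefficient of $q^n$ on the two sides. On the left it is $\overline{p}_o(n)$; on the right, carrying out the Cauchy product, it is $\sum_{k}(-1)^{\lceil k/2\rceil}p\!\left(n-\frac{k(3k+1)}{2}\right)$, the sum taken over all $k\in\mathbb{Z}$ with the convention $p(x)=0$ whenever $x\notin\mathbb{Z}_{\geq0}$, so that only finitely many terms survive for each fixed $n$. This is exactly the asserted relation.

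There is no genuine difficulty here: the entire content is packaged in \eqref{1s11} together with \eqref{1s8}, and the remaining argument is pure bookkeeping. The only point that deserves a word of care is the convention $p(x)=0$ for negative or non-integral $x$, which is what turns the doubly-infinite sum over $k$ into a finite sum and thereby legitimizes the term-by-term comparison of coefficients; one may also note that this is the same mechanism already used for Theorem \ref{T1}, now with $1/(q;q)_\infty=\sum_{m\ge0} p(m)q^m$ inserted in place of the constant series.
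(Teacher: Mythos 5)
Your proposal is correct and is essentially identical to the paper's own argument: the paper likewise divides \eqref{1s11} by the pentagonal series $(q;q)_\infty$ (via \eqref{1s14}) to obtain \eqref{1s15} and then reads off coefficients. No differences worth noting.
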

\begin{proof}
	From \eqref{1s8}, we have
	\begin{equation}\label{1s14}
	\frac{1}{(q;q)_\infty}=\left[\sum_{n=-\infty}^{\infty}(-1)^nq^{n(3n+1)/2}\right]^{-1}.
	\end{equation}
	From \eqref{1s11}, we see that
	\begin{equation}\label{1s15}
     \sum_{n=0}^{\infty}\overline{p}_o(n)q^n=\left(\sum_{n=0}^{\infty}p(n)q^n\right)\left(\sum_{n=-\infty}^{\infty}(-1)^{\lceil{n/2}\rceil}q^{n(3n+1)/2}\right).
	\end{equation}
	The proof follows from the above identity.
\end{proof}
\begin{coro}
	For any integer $n>0$,
	\begin{equation*}
	\sum_{k=-\infty}^{\infty}(-1)^{\lceil{k/2}\rceil}{p}\left(n-\frac{k(3k+1)}{2}\right)\equiv 0\ \ (\rm{mod}\ \ {2}).
	\end{equation*}
\end{coro}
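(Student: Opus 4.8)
The plan is to deduce the corollary formally from Theorem~\ref{T5} together with the (well-known) fact that $\overline{p}_o(n)$ is even for every $n>0$. Theorem~\ref{T5} asserts precisely that
\[
\sum_{k=-\infty}^{\infty}(-1)^{\lceil k/2\rceil}p\!\left(n-\tfrac{k(3k+1)}{2}\right)=\overline{p}_o(n),
\]
so the congruence to be proved is literally the statement $\overline{p}_o(n)\equiv 0\pmod 2$ for $n>0$. Thus the entire content reduces to establishing this parity.

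To obtain the parity of $\overline{p}_o(n)$, I would pass to generating functions and reduce coefficients modulo $2$. Starting from $\sum_{n\ge 0}\overline{p}_o(n)q^n=(-q;q^2)_\infty/(q;q^2)_\infty$, observe that $1+q^{2k+1}\equiv 1-q^{2k+1}\pmod 2$ for every $k\ge 0$, so the numerator and denominator coincide modulo $2$ as elements of $\mathbb{Z}[[q]]$, i.e. $(-q;q^2)_\infty\equiv(q;q^2)_\infty\pmod 2$. Since $(q;q^2)_\infty$ has constant term $1$ it is a unit in $\mathbb{Z}[[q]]$, and reduction modulo $2$ is a ring homomorphism $\mathbb{Z}[[q]]\to\mathbb{F}_2[[q]]$; hence $\sum_{n\ge 0}\overline{p}_o(n)q^n\equiv 1\pmod 2$. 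Comparing coefficients gives $\overline{p}_o(0)=1$ and $\overline{p}_o(n)\equiv 0\pmod 2$ for all $n>0$. (As an alternative route, the same conclusion is immediate from Theorem~\ref{T3}: for $n>0$ its right-hand side is even, while on the left $\overline{p}_o(n)$ differs from a multiple of $2$ only by itself.)

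Combining the two ingredients, for $n>0$ we get
\[
\sum_{k=-\infty}^{\infty}(-1)^{\lceil k/2\rceil}p\!\left(n-\tfrac{k(3k+1)}{2}\right)=\overline{p}_o(n)\equiv 0\pmod 2,
\]
which is exactly the assertion of the corollary.

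There is no real obstacle: the statement is a formal consequence of Theorem~\ref{T5}, and the only extra input is the parity of $\overline{p}_o(n)$, itself a one-line consequence of the product shape of its generating function modulo $2$. The single point that deserves care is to carry out the mod-$2$ reduction at the level of formal power series, so that ``dividing by $(q;q^2)_\infty$'' is legitimate, rather than attempting to argue term by term.
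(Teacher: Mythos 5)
Your proof is correct and follows essentially the same route as the paper, which simply cites Theorem~\ref{T5} together with the parity of $\overline{p}_o(n)$; your only addition is to actually justify that parity (via $(-q;q^2)_\infty\equiv(q;q^2)_\infty\pmod 2$, or alternatively from Theorem~\ref{T3}), which the paper leaves implicit.
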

\begin{proof}
	It easily follows from the Theorem \ref{T5} and the parity of $\overline{p}_o(n).$ 
\end{proof}
\begin{theorem}\label{T6}
	For any integer $n \geq 0$,
	\begin{equation*}
	\overline{p}_o(n)=\sum_{k=-\infty}^{\infty}(-1)^{k}\overline{p}(n-2k^2).
	\end{equation*}
\end{theorem}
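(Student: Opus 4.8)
The plan is to establish the generating-function identity
\begin{equation*}
\sum_{n=0}^{\infty}\overline{p}_o(n)q^n=\left(\sum_{n=0}^{\infty}\overline{p}(n)q^n\right)\left(\sum_{k=-\infty}^{\infty}(-1)^kq^{2k^2}\right)
\end{equation*}
and then to read off Theorem \ref{T6} by equating the coefficient of $q^n$ on both sides, exactly as in the proofs of Theorems \ref{T4} and \ref{T5}. In view of \eqref{gfop} and the product form $(-q;q^2)_\infty/(q;q^2)_\infty$ of the generating function of $\overline{p}_o(n)$, this amounts to proving the $q$-series identity
\begin{equation*}
\frac{(-q;q^2)_\infty}{(q;q^2)_\infty}=\frac{(-q;q)_\infty}{(q;q)_\infty}\sum_{k=-\infty}^{\infty}(-1)^kq^{2k^2}.
\end{equation*}

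First I would obtain a product representation of the theta series on the right. Rewriting \eqref{1s3} yields
\begin{equation*}
\sum_{k=-\infty}^{\infty}(-1)^kq^{2k^2}=(q;q^2)_\infty(-q;q^2)_\infty(q^2;q^2)_\infty ;
\end{equation*}
alternatively this is the specialization of Jacobi's triple product \eqref{1s1} with $a=b=-q^2$, on noting that $(-q^2)^{k(k+1)/2}(-q^2)^{k(k-1)/2}=(-1)^{k^2}q^{2k^2}=(-1)^kq^{2k^2}$ since $k^2\equiv k\pmod 2$.

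Next I would substitute this into the right-hand side of the target identity and simplify, using the elementary splittings $(q;q)_\infty=(q;q^2)_\infty(q^2;q^2)_\infty$ and $(-q;q)_\infty=(-q;q^2)_\infty(-q^2;q^2)_\infty$ together with Euler's identity $(-q;q)_\infty(q;q^2)_\infty=1$. The right-hand side then collapses to
\begin{equation*}
\frac{(-q;q)_\infty}{(q;q)_\infty}(q;q^2)_\infty(-q;q^2)_\infty(q^2;q^2)_\infty=(-q;q)_\infty(-q;q^2)_\infty=\frac{(-q;q^2)_\infty}{(q;q^2)_\infty},
\end{equation*}
which is precisely $\sum_{n\ge 0}\overline{p}_o(n)q^n$. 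Comparing coefficients of $q^n$ then gives Theorem \ref{T6}.

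The computation is entirely routine; the only point requiring a little care is the passage between the $q$-Pochhammer side and the theta side, i.e.\ correctly assembling the bisections into odd and even parts and the applications of $(a;q)_\infty(-a;q)_\infty=(a^2;q^2)_\infty$ and of Euler's identity. All the series involved converge absolutely for $|q|<1$, so the rearrangements and the term-by-term extraction of coefficients are justified, and there is no serious obstacle.
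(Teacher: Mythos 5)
Your proposal is correct and follows essentially the same route as the paper: both arguments rest on the product evaluation $\sum_{k}(-1)^kq^{2k^2}=(q;q^2)_\infty(-q;q^2)_\infty(q^2;q^2)_\infty$ coming from \eqref{1s3} (i.e.\ Jacobi's triple product with $a=b=-q^2$), combined with Euler's identity $(-q;q)_\infty(q;q^2)_\infty=1$ to match the generating functions of $\overline{p}_o(n)$ and $\overline{p}(n)$. The only difference is cosmetic — you collapse the right-hand side to the left, while the paper inserts the factor $(q;q)_\infty/(q;q)_\infty$ and identifies the theta function going the other way — so nothing further is needed.
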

\begin{proof}
Considering the generating function of $\overline{p}_o(n)$,
\begin{eqnarray}\label{e45}
	\sum_{n=0}^{\infty}\overline{p}_o(n)q^n&=&(-q;q)_\infty(-q;q^2)_\infty\\
	&=&\frac{(-q;q)_\infty}{(q;q)_\infty}(-q;q^2)_\infty(q;q)_\infty\nonumber.
\end{eqnarray}
	Using the identity \eqref{1s3} and \eqref{gfop} 
	\begin{align*}
		\sum_{n=0}^{\infty}\overline{p}_o(n)q^n=&\left(\sum_{n=0}^{\infty}\overline{p}(n)q^n\right)\left(\sum_{n=-\infty}^{\infty}(-1)^nq^{2n^2}\right).\nonumber
	\end{align*}
The proof follows from the final equation.
\end{proof}
\noindent The following equations \eqref{e9.1} and \eqref{e9.2} are due to Hemanthkumar and Chandankumar \cite{HKCK},  
\begin{align}\label{e9.1}
	\sum_{n=0}^{\infty}\overline{p}_o(2n+1) q^n &= 2 \dfrac{(q^2;q^2)_\infty {(q^8;q^8)^2}_\infty}{{(q;q)^2}_\infty (q^4;q^4)_\infty}
\end{align}
and
\begin{align}\label{e9.2}
	\sum_{n=0}^{\infty} \overline{p}_o(2n) q^n &= \dfrac{{(q^4;q^4)^5}_\infty}{{(q;q)^2}_\infty (q^2;q^2)_\infty {(q^8;q^8)^2}_\infty}. 
\end{align}

\begin{theorem}{For any non negative integer $n$,}\label{T07}
	\begin{align*}
		\overline{p}_o(2n+1) = 2 \sum_{n=0}^{\infty} \overline{p}\left(n-{2k(k+1)}\right).
	\end{align*}
\end{theorem}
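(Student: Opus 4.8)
The plan is to deduce the identity from the dissection formula \eqref{e9.1} by factoring its right-hand side as the generating function of $\overline{p}(n)$ multiplied by a theta series concentrated on the values $2k(k+1)$, and then to compare coefficients of $q^n$.

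First I would put $\sum_{n\ge 0}\overline{p}(n)q^n$ into product form. Combining \eqref{gfop} with the elementary factorization $(-q;q)_\infty(q;q)_\infty=(q^2;q^2)_\infty$ gives $(-q;q)_\infty=(q^2;q^2)_\infty/(q;q)_\infty$, so that
$$\sum_{n=0}^{\infty}\overline{p}(n)q^n=\frac{(q^2;q^2)_\infty}{(q;q)_\infty^{2}}.$$
Next I would recognise the leftover factor as a theta series: replacing $q$ by $q^{4}$ in \eqref{1s7} yields
$$\sum_{k=0}^{\infty}q^{2k(k+1)}=\frac{(q^{8};q^{8})_\infty^{2}}{(q^{4};q^{4})_\infty}.$$

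With these two facts in hand, \eqref{e9.1} becomes
$$\sum_{n=0}^{\infty}\overline{p}_o(2n+1)q^n=2\,\frac{(q^{2};q^{2})_\infty}{(q;q)_\infty^{2}}\cdot\frac{(q^{8};q^{8})_\infty^{2}}{(q^{4};q^{4})_\infty}=2\left(\sum_{n=0}^{\infty}\overline{p}(n)q^n\right)\left(\sum_{k=0}^{\infty}q^{2k(k+1)}\right),$$
and extracting the coefficient of $q^n$ from both sides gives $\overline{p}_o(2n+1)=2\sum_{k\ge 0}\overline{p}\bigl(n-2k(k+1)\bigr)$, with the usual convention that $\overline{p}(x)=0$ for $x<0$ (so that the sum over $k$ is finite). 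The argument is essentially bookkeeping; the only step requiring a moment's thought is spotting that $(q^{8};q^{8})_\infty^{2}/(q^{4};q^{4})_\infty$ is precisely \eqref{1s7} after the substitution $q\mapsto q^{4}$, and once that observation is made there is no serious obstacle.
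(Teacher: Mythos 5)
Your proof is correct and follows essentially the same route as the paper: both start from \eqref{e9.1}, identify $(q^2;q^2)_\infty/(q;q)_\infty^2$ as the generating function of $\overline{p}(n)$, recognise $(q^8;q^8)_\infty^2/(q^4;q^4)_\infty$ as \eqref{1s7} with $q\mapsto q^4$, and extract coefficients. Your write-up is actually a bit more explicit than the paper's about the $q\mapsto q^4$ substitution, and you correctly read the summation index in the statement as $k$ rather than the typographical $n$.
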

\begin{proof}
	Rearranging \eqref{e9.1}, using the generating function of $\overline{p}(n)$ and \eqref{1s7} we get
	\begin{align*}
		\sum_{n=0}^{\infty}\overline{p}_o(2n+1) q^n &= 2 \sum_{n=0}^{\infty}\overline{p}(n) q^n \sum_{n=0}^{\infty} q^\frac{4n(n+1)}{2},\\
		&= 2 \sum_{n=0}^{\infty}\left(\sum_{k=0}^{\infty} \overline{p}(n-2k(k+1)) \right) q^n.
	\end{align*}
Equating the coefficient of $q^n$ on both sides of the last equation, we arrive at the result. This completes the proof.
\end{proof}

\begin{theorem}{For any non negative integer $n$,}\label{T08}
	$$ \overline{p_o}(2n) =\overline{p}(n) + 2 \sum_{n=0}^{\infty} \overline{p}(n-2k^2).$$	
\end{theorem}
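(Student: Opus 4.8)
The plan is to read the identity directly off the $2$-dissection \eqref{e9.2}, by peeling away the generating function of $\overline{p}(n)$ and recognizing what remains as a theta series supported on twice the perfect squares. First I would rewrite the infinite products in \eqref{e9.2} using the Euler identities $(-q;q)_\infty=(q^2;q^2)_\infty/(q;q)_\infty$, $(-q^2;q^2)_\infty=(q^4;q^4)_\infty/(q^2;q^2)_\infty$ and $(-q^4;q^4)_\infty=(q^8;q^8)_\infty/(q^4;q^4)_\infty$. Pulling out the factor $(q^2;q^2)_\infty/(q;q)^2_\infty=(-q;q)_\infty/(q;q)_\infty=\sum_{n\ge0}\overline{p}(n)q^n$ (by \eqref{gfop}) leaves the eta-quotient
\[
R:=\frac{(q^4;q^4)^5_\infty}{(q^2;q^2)^2_\infty\,(q^8;q^8)^2_\infty},
\qquad\text{so that}\qquad
\sum_{n=0}^{\infty}\overline{p}_o(2n)q^n=\Bigl(\sum_{n=0}^{\infty}\overline{p}(n)q^n\Bigr)R.
\]

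Next I would show $R=\sum_{k=-\infty}^{\infty}q^{2k^2}$. Replacing $q$ by $q^2$ in \eqref{1s4} gives $(-q^2;q^4)^2_\infty(q^4;q^4)_\infty=\sum_{k=-\infty}^{\infty}q^{2k^2}$, while the same Euler identities yield $(-q^2;q^4)_\infty=(-q^2;q^2)_\infty/(-q^4;q^4)_\infty=(q^4;q^4)^2_\infty/\bigl((q^2;q^2)_\infty(q^8;q^8)_\infty\bigr)$; substituting this back shows the left-hand side equals $R$. Hence $R=\sum_{k=-\infty}^{\infty}q^{2k^2}=1+2\sum_{k=1}^{\infty}q^{2k^2}$, and
\[
\sum_{n=0}^{\infty}\overline{p}_o(2n)q^n=\Bigl(\sum_{n=0}^{\infty}\overline{p}(n)q^n\Bigr)\Bigl(1+2\sum_{k=1}^{\infty}q^{2k^2}\Bigr).
\]

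Extracting the coefficient of $q^n$ from the last display gives $\overline{p}_o(2n)=\overline{p}(n)+2\sum_{k\ge1}\overline{p}(n-2k^2)$, with the convention $\overline{p}(x)=0$ for $x\notin\mathbb{Z}_{\geq0}$, which is the asserted identity. The only step requiring genuine care is the product bookkeeping: converting $(-q^2;q^4)_\infty$ into eta-quotient form and checking that it reproduces $R$ exactly, since this is precisely where a spurious factor of $(q^2;q^2)_\infty$ or $(q^8;q^8)_\infty$ is easiest to introduce or drop. No analytic subtlety arises, as every manipulation is an identity of formal power series in $q$, so term-by-term extraction of coefficients is legitimate.
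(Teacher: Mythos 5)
Your proposal is correct and follows essentially the same route as the paper: factor $\sum_{n\ge0}\overline{p}(n)q^n=(q^2;q^2)_\infty/(q;q)^2_\infty$ out of \eqref{e9.2} and identify the remaining eta-quotient with $\sum_{k=-\infty}^{\infty}q^{2k^2}$ via \eqref{1s4} with $q$ replaced by $q^2$; you merely spell out the product bookkeeping that the paper compresses into the phrase ``Invoking \eqref{1s4}.'' You also correctly read the misprinted right-hand side of the statement as $\overline{p}(n)+2\sum_{k\ge1}\overline{p}(n-2k^2)$, which is what the argument actually yields.
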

\begin{proof}
	Rearranging the equation \eqref{e9.2} and \eqref{gfop}, we have
	\begin{align*}
		\sum_{n=0}^{\infty} \overline{p_o}(2n) q^n 	&= \dfrac{(q^4;q^4)^5_\infty}{(q^2;q^2)^2_\infty (q^8;q^8)^2_\infty}  \sum_{n=0}^{\infty} \overline{p}(n) q^n
	\end{align*}
Invoking \eqref{1s4}, we get
\begin{align*}
		\sum_{n=0}^{\infty} \overline{p_o}(2n) q^n &= \sum_{n=-\infty}^{\infty} q^{2n^2} \sum_{n=0}^{\infty} \overline{p}(n) q^n.
\end{align*}
Equating the coefficient of $q^n$ on both sides of the above equation completes the proof.
\end{proof}
Let $P_2(n)$ denote the partition function of $n$ with parts not congruent to $2\pmod{4}$. The generating function is,  
\begin{equation*}
	\sum_{n=0}^{\infty}P_2(n)q^{n}=\frac{(q^2;q^4)_{\infty}}{(q;q)_{\infty}}.
\end{equation*}
\begin{theorem}{For any positive integer n,}\label{T09}
	\begin{align*}
		\overline{p_o}(n)=\sum_{k=0}^{\infty} P_2 \left(n-\dfrac{k(k+1)}{2}\right).
	\end{align*}
\end{theorem}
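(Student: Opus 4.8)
\emph{Proof proposal.} The plan is to transform the generating function of $\overline{p_o}(n)$ into the product of the generating function of $P_2(n)$ with the theta-type series $\sum_{n\ge 0}q^{n(n+1)/2}$, and then to equate coefficients of $q^n$, exactly as in the proofs of Theorems \ref{T4}--\ref{T08}.

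First I would record the elementary product identity $(-q;q^2)_\infty(q;q^2)_\infty=(q^2;q^4)_\infty$ (since $(1-q^{2k-1})(1+q^{2k-1})=1-q^{4k-2}$), so that
\[
\sum_{n=0}^{\infty}\overline{p_o}(n)q^n=\frac{(-q;q^2)_\infty}{(q;q^2)_\infty}=\frac{(q^2;q^4)_\infty}{(q;q^2)^2_\infty}.
\]
Next, using $(q;q)_\infty=(q;q^2)_\infty(q^2;q^2)_\infty$ I would rewrite $\dfrac{1}{(q;q^2)^2_\infty}=\dfrac{(q^2;q^2)^2_\infty}{(q;q)^2_\infty}$, which gives
\[
\sum_{n=0}^{\infty}\overline{p_o}(n)q^n=\frac{(q^2;q^4)_\infty}{(q;q)_\infty}\cdot\frac{(q^2;q^2)^2_\infty}{(q;q)_\infty}.
\]

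Then I would invoke the generating function of $P_2(n)$ for the first factor and identity \eqref{1s7} for the second factor to obtain
\[
\sum_{n=0}^{\infty}\overline{p_o}(n)q^n=\left(\sum_{n=0}^{\infty}P_2(n)q^n\right)\left(\sum_{n=0}^{\infty}q^{n(n+1)/2}\right)=\sum_{n=0}^{\infty}\left(\sum_{k=0}^{\infty}P_2\!\left(n-\frac{k(k+1)}{2}\right)\right)q^n,
\]
with the usual convention $P_2(x)=0$ for $x\notin\mathbb{Z}_{\ge 0}$. Equating the coefficient of $q^n$ on both sides then yields the claimed relation.

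I do not expect any genuine obstacle here: the entire argument reduces to the single product rearrangement $\dfrac{(-q;q^2)_\infty}{(q;q^2)_\infty}=\dfrac{(q^2;q^4)_\infty}{(q;q)_\infty}\cdot\dfrac{(q^2;q^2)^2_\infty}{(q;q)_\infty}$, and each step of it is a standard infinite-product manipulation; the only point requiring care is correct bookkeeping of the $(q;q^2)_\infty$, $(q^2;q^2)_\infty$ and $(q^2;q^4)_\infty$ factors. I would also remark that, comparing \eqref{gf1} with the generating function of $P_2(n)$ and using $(q;q)_\infty=(q;q^2)_\infty(q^2;q^2)_\infty$, one sees $\pood(n)=P_2(n)$ for all $n$, so Theorem \ref{T09} can alternatively be read off directly from Theorem \ref{T4}.
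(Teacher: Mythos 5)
Your proposal is correct and follows essentially the same route as the paper: both rewrite $\frac{(-q;q^2)_\infty}{(q;q^2)_\infty}$ as $\frac{(q^2;q^4)_\infty}{(q;q)_\infty}\cdot\frac{(q^2;q^2)^2_\infty}{(q;q)_\infty}$, invoke \eqref{1s7}, and equate coefficients; you merely spell out the product manipulations the paper leaves implicit. Your closing remark that $\pood(n)=P_2(n)$, so the result also follows from Theorem \ref{T4}, is a correct and worthwhile observation.
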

\begin{proof} Consider the generating function of $\overline{p_o}(n)$,
	\begin{align*}
		\zisum \overline{p_o}(n)q^n &=\dfrac{(-q;q^2)_\infty}{(q;q^2)_\infty} \\
		&=\dfrac{(q^2;q^4)_\infty}{(q;q)_\infty} \dfrac{{(q^2;q^2)^2_\infty}}{(q;q)_\infty}.
	\end{align*}
	Invoking \eqref{1s7}, we get
	\begin{align*}
		\zisum \overline{p_o}(n)q^n &=\zisum P_2(n)q^n \zisum q^{\frac{n(n+1)}{2}},\\
		\zisum \overline{p_o}(n)q^{n} &= \zisum \left( \sum_{k=0}^{\infty} P_2 \left(n-\dfrac{k(k+1)}{2}\right) \right) q^n.
	\end{align*}
Equating the coefficient of $q^n$ on both sides of the above equation, this completes the proof.
\end{proof}
The parity of $\overline{p}_o(n)$ and Theorem \ref{T09} allows us to derive the following congruence.
\begin{coro}
	For any integer $n>0$,
	\begin{equation*}
		\sum_{k=0}^{\infty}{P_2}\left(n-\frac{k(k+1)}{2}\right)\equiv 0\ \ (\rm{mod}\ \ {2}).
	\end{equation*}
\end{coro}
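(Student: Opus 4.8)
The plan is to obtain the congruence as an immediate consequence of Theorem~\ref{T09} once the parity of $\overline{p}_o(n)$ is in hand. Indeed, Theorem~\ref{T09} states that for every positive integer $n$
\[
\overline{p}_o(n)=\sum_{k=0}^{\infty}P_2\!\left(n-\frac{k(k+1)}{2}\right),
\]
so the claim reduces to the single assertion that $\overline{p}_o(n)$ is even for all $n>0$. This parity fact is exactly what is invoked (without repetition) in the two preceding corollaries, and the table of small values in the introduction already displays it for $1\le n\le 10$; for completeness I would include a short proof of it here.

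To prove the parity of $\overline{p}_o(n)$ I would argue modulo $2$ on generating functions. Starting from $\sum_{n\ge 0}\overline{p}_o(n)q^n=\dfrac{(-q;q^2)_\infty}{(q;q^2)_\infty}$ and using the elementary product identity $(-q;q^2)_\infty(q;q^2)_\infty=(q^2;q^4)_\infty$, one rewrites
\[
\sum_{n=0}^{\infty}\overline{p}_o(n)q^n=\frac{(q^2;q^4)_\infty}{(q;q^2)^2_\infty}.
\]
Since $f(q)^2\equiv f(q^2)\pmod 2$ for any $f\in\mathbb{Z}[[q]]$, we have $(q;q^2)^2_\infty\equiv(q^2;q^4)_\infty\pmod 2$, and because both series have constant term $1$ their reciprocals are congruent modulo $2$ as well. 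Substituting this into the last display gives
\[
\sum_{n=0}^{\infty}\overline{p}_o(n)q^n\equiv\frac{(q^2;q^4)_\infty}{(q^2;q^4)_\infty}=1\pmod 2,
\]
i.e.\ $\overline{p}_o(0)=1$ and $\overline{p}_o(n)\equiv 0\pmod 2$ for all $n>0$.

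Combining the two ingredients, for $n>0$ we get
\[
\sum_{k=0}^{\infty}P_2\!\left(n-\frac{k(k+1)}{2}\right)=\overline{p}_o(n)\equiv 0\pmod 2,
\]
which is the corollary. There is essentially no obstacle here; the only point that deserves a line of justification is the passage from $(q;q^2)^2_\infty\equiv(q^2;q^4)_\infty\pmod 2$ to the congruence of the reciprocals, namely the standard fact that $A\equiv B\pmod 2$ with $A,B\in\mathbb{Z}[[q]]$ and $A(0)=B(0)=1$ forces $A^{-1}\equiv B^{-1}\pmod 2$ (from $A^{-1}-B^{-1}=A^{-1}(B-A)B^{-1}$). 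If one prefers to avoid inverses, the same parity statement follows by iterating $\sum_{n\ge0}\overline{p}_o(n)q^n\equiv (q^{2^j};q^{2^j})_\infty(q^{2^j};q^{2^{j+1}})_\infty\pmod 2$ for $j\ge 0$ and letting $j\to\infty$, since the right-hand side equals $1+O(q^{2^j})$.
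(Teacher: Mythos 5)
Your proposal is correct and follows exactly the paper's route: the corollary is read off from Theorem~\ref{T09} together with the fact that $\overline{p}_o(n)$ is even for $n>0$. The only difference is that you actually prove the parity fact (via $(q;q^2)_\infty^2\equiv(q^2;q^4)_\infty\pmod 2$), whereas the paper merely asserts ``the parity of $\overline{p}_o(n)$'' without proof, so your write-up is, if anything, more complete.
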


\noindent The following result is a relation connecting the partition functions $\overline{q}(n)$ and $p(n)$.
Let $\overline{q}(n)$ denotes the bipartition function of $n$ into distinct parts, the generating function $\overline{q}(n)$ is given by
\begin{equation*}
	\sum_{n=0}^{\infty}\overline{q}(n)q^{n}={(-q;q)^2_{\infty}}.
\end{equation*}

\begin{theorem}{For any positive integer n,}
	$$\overline{q}(n)=\sum_{k=0}^{\infty} p \left(n-\dfrac{k(k+1)}{2}\right).$$
\end{theorem}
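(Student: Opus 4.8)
The plan is to mimic the pattern used throughout Section~\ref{S3}: rewrite the generating function $\sum_{n=0}^{\infty}\overline{q}(n)q^n=(-q;q)^2_\infty$ as a product of $1/(q;q)_\infty$ with $\sum_{n=0}^{\infty}q^{n(n+1)/2}$, and then read off the coefficient of $q^n$. The only algebraic input beyond \eqref{1s7} is the elementary identity
\[
(-q;q)_\infty=\frac{(q^2;q^2)_\infty}{(q;q)_\infty},
\]
which follows from $(q;q)_\infty=(q;q^2)_\infty(q^2;q^2)_\infty$ together with $(-q;q)_\infty(q;q)_\infty=(q^2;q^2)_\infty$.

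Squaring that identity gives $(-q;q)^2_\infty=\dfrac{(q^2;q^2)^2_\infty}{(q;q)^2_\infty}$, and then factoring out one copy of $1/(q;q)_\infty$ and applying \eqref{1s7} to the remaining factor $\dfrac{(q^2;q^2)^2_\infty}{(q;q)_\infty}$ yields
\[
\sum_{n=0}^{\infty}\overline{q}(n)q^n
=\frac{1}{(q;q)_\infty}\sum_{n=0}^{\infty}q^{n(n+1)/2}
=\left(\sum_{n=0}^{\infty}p(n)q^n\right)\left(\sum_{n=0}^{\infty}q^{n(n+1)/2}\right).
\]
Comparing the coefficient of $q^n$ on both sides (and noting $p(x)=0$ for $x\notin\mathbb{Z}_{\ge0}$) gives $\overline{q}(n)=\sum_{k=0}^{\infty}p\!\left(n-\tfrac{k(k+1)}{2}\right)$, as claimed.

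There is essentially no serious obstacle here; the ``hard part'' is simply recognizing the right factorization of $(-q;q)^2_\infty$ so that \eqref{1s7} applies directly. If one prefers to avoid invoking \eqref{1s7}, the same conclusion can be reached by applying Jacobi's triple product \eqref{1s1} with $a$ and $b$ both replaced by $q$ (compare \eqref{1s4}) after the substitution $q\mapsto q^{1/2}$, but the route through \eqref{1s7} keeps the argument entirely within the identities already established in the paper.
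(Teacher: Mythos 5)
Your proof is correct and rests on exactly the same two ingredients as the paper's argument, namely the Euler identity $(-q;q)_\infty(q;q)_\infty=(q^2;q^2)_\infty$ and the theta identity \eqref{1s7}, and it arrives at the same product formula $\sum_{n\ge0}\overline{q}(n)q^n=\bigl(\sum_{n\ge0}p(n)q^n\bigr)\bigl(\sum_{n\ge0}q^{n(n+1)/2}\bigr)$. The only difference is cosmetic: the paper takes a detour by starting from the generating function $(-q;q^2)_\infty(-q;q)_\infty$ of $\overline{p}_o(n)$ and then multiplying through by $(-q^2;q^2)_\infty$ to produce $(-q;q)^2_\infty$, whereas you manipulate $(-q;q)^2_\infty$ directly, which is slightly more economical.
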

\begin{proof}
	Consider the generating function of $\overline{p_o}(n)$,
	\begin{align*}
		\sum_{n=0}^{\infty}\overline{p_o}(n) q^n &= (-q;q^2)_\infty (-q;q)_\infty\\
		 &= \dfrac{(-q;q^2)_\infty}{(q^2;q^2)_\infty} \left(\dfrac{{(q^2;q^2)^2_\infty}}{(q;q)_\infty}\right)\\	&=\dfrac{1}{(q;q)_\infty} \times \dfrac{1}{(-q^2;q^2)_\infty} \sum_{n=0}^{\infty} q^{\frac{n(n+1)}{2}}.
	\end{align*}
	Rewriting the above equation we have
	\begin{align*}
		{(-q^2;q^2)_\infty} \sum_{n=0}^{\infty}\overline{p_o}(n)q^n  &=\sum_{n=0}^{\infty} p(n)q^n \sum_{n=0}^{\infty} q^{\frac{n(n+1)}{2}},\\
		\sum_{n=0}^{\infty}\overline{q}(n) q^n &=\zisum \left( \sum_{k=0}^{\infty} p \left(n-\dfrac{k(k+1)}{2}\right) \right) q^n.
	\end{align*}
	Equating the coefficient of $q^n$ on either side of the previous equation completes the proof. 
\end{proof}
Let $p_d(n)$ denote the number of partitions of $n$ into distinct parts. The number of partitions of  $n$ into distinct odd parts is denoted by $p_{do}(n).$ The generating functions for $p_d(n)$ and $p_{do}(n)$ are given by
\begin{equation}\label{1s17}
\sum_{n=0}^{\infty}p_d(n)q^n=(-q;q)_\infty
\end{equation}
and
\begin{equation}\label{1s18}
\sum_{n=0}^{\infty}p_{do}(n)q^n=(-q;q^2)_\infty.
\end{equation}
It is easy to see that 
\begin{equation}
\overline{p}_o(n)=\sum_{k=0}^{n}{p_d}(k)p_{do}(n-k).
\end{equation}
\begin{theorem}\label{T7}
	For any integer $n \geq 0$, 
	\begin{equation*}
	\sum_{k=-\infty}^{\infty}(-1)^{k}\overline{p}_o\left(n-\frac{k(3k+1)}{2}\right)=\sum_{k=-\infty}^{\infty}(-1)^{k}{p_{do}}(n-k(3k+1)).
	\end{equation*}
\end{theorem}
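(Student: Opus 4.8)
\textbf{Proof proposal for Theorem \ref{T7}.} The plan is to show that both sides of the asserted identity are the coefficients of $q^n$ in one and the same infinite product, namely $(-q;q^2)_\infty(q^2;q^2)_\infty$, and hence coincide term by term. I would work entirely on the level of generating functions.

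First I would treat the left-hand side. Writing it as the Cauchy product
\[
\sum_{n=0}^\infty\Bigl(\sum_{k=-\infty}^{\infty}(-1)^k\overline{p}_o\bigl(n-\tfrac{k(3k+1)}{2}\bigr)\Bigr)q^n
=\Bigl(\sum_{n=0}^\infty\overline{p}_o(n)q^n\Bigr)\Bigl(\sum_{k=-\infty}^{\infty}(-1)^kq^{k(3k+1)/2}\Bigr),
\]
I would invoke Euler's pentagonal number theorem \eqref{1s8}, which identifies the second factor as $(q;q)_\infty$, together with the generating function $\dfrac{(-q;q^2)_\infty}{(q;q^2)_\infty}$ for $\overline{p}_o(n)$. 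Using the elementary factorization $(q;q)_\infty=(q;q^2)_\infty(q^2;q^2)_\infty$, the product telescopes to
\[
\frac{(-q;q^2)_\infty}{(q;q^2)_\infty}\,(q;q^2)_\infty(q^2;q^2)_\infty=(-q;q^2)_\infty(q^2;q^2)_\infty .
\]

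Next I would treat the right-hand side in the same fashion:
\[
\sum_{n=0}^\infty\Bigl(\sum_{k=-\infty}^{\infty}(-1)^k p_{do}(n-k(3k+1))\Bigr)q^n
=\Bigl(\sum_{n=0}^\infty p_{do}(n)q^n\Bigr)\Bigl(\sum_{k=-\infty}^{\infty}(-1)^k q^{k(3k+1)}\Bigr).
\]
Here the first factor is $(-q;q^2)_\infty$ by \eqref{1s18}, while the second factor is just Euler's pentagonal number theorem \eqref{1s8} with $q$ replaced by $q^2$, hence equal to $(q^2;q^2)_\infty$. So the right-hand side also has generating function $(-q;q^2)_\infty(q^2;q^2)_\infty$. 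Comparing coefficients of $q^n$ on the two sides completes the proof.

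I do not anticipate a genuine obstacle: the only points requiring care are recognizing the dilated pentagonal number theorem in the $p_{do}$-computation and using the split $(q;q)_\infty=(q;q^2)_\infty(q^2;q^2)_\infty$ to cancel the denominator coming from $\overline{p}_o$. (One could alternatively start directly from the identity \eqref{1s11} established in the proof of Theorem \ref{T1}, multiply through by $(q;q)_\infty$, and manipulate; but the route above via \eqref{1s8} and \eqref{1s18} is the cleanest.)
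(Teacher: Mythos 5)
Your proposal is correct and follows essentially the same route as the paper: both arguments reduce the two sides to the single product $(-q;q^2)_\infty(q^2;q^2)_\infty$, using Euler's pentagonal number theorem at $q$ for the $\overline{p}_o$ side and at $q^2$ for the $p_{do}$ side. The only cosmetic difference is that the paper cancels via $(-q;q)_\infty(q;q)_\infty=(q^2;q^2)_\infty$ starting from the form $(-q;q)_\infty(-q;q^2)_\infty$ of the generating function, whereas you use the equivalent split $(q;q)_\infty=(q;q^2)_\infty(q^2;q^2)_\infty$.
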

\begin{proof}
	Relation \eqref{e45} can be rewritten as 
	\begin{equation}
	(-q;q^2)_\infty(-q;q)_\infty(q;q)_\infty=(-q;q^2)_\infty(q^2;q^2)_\infty,
	\end{equation}
	which implies that 
	\begin{equation}
	\left(\sum_{n=0}^{\infty}\overline{p}_o(n)q^n\right)\left(\sum_{n=-\infty}^{\infty}(-1)^nq^{n(3n+1)/2} \right)=\left(\sum_{n=0}^{\infty}p_{do}(n)q^n\right)\left(\sum_{n=-\infty}^{\infty}(-1)^nq^{n(3n+1)}\right).
	\end{equation}
\end{proof}
By using Theorem \ref{T1} and Theorem \ref{T7}, we obtain the following recurrence relation for $p_{do}(n).$
\begin{coro}
	For any integer $n \geq 0$,
	\begin{equation*}
	\sum_{k=-\infty}^{\infty}(-1)^{k}{p_{do}}(n-k(3k+1))=
	\begin{cases}
	(-1)^{{\lceil m/2\rceil}}, &\text{if}\ n=\frac{m(3m+1)}{2}, \ m \in \mathbb{Z},
	\\ 0, &\text{otherwise}.
	\end{cases}
	\end{equation*}
\end{coro}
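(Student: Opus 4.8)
The plan is to derive this corollary as an immediate consequence of Theorems~\ref{T7} and \ref{T1}, so that essentially no new computation is required.

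First I would take the left-hand side $\sum_{k=-\infty}^{\infty}(-1)^{k}p_{do}(n-k(3k+1))$ and apply Theorem~\ref{T7}, which asserts precisely that this quantity equals $\sum_{k=-\infty}^{\infty}(-1)^{k}\overline{p}_o\!\left(n-\tfrac{k(3k+1)}{2}\right)$. Then I would invoke Theorem~\ref{T1}, which evaluates this last sum as $(-1)^{\lceil m/2\rceil}$ when $n=\tfrac{m(3m+1)}{2}$ for some $m\in\mathbb{Z}$ and as $0$ otherwise. Stringing the two equalities together gives the claimed formula directly, with the same case distinction.

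The only point worth a brief comment — and it is not a genuine obstacle — is that the case distinction is well posed: the generalized pentagonal numbers $\tfrac{m(3m+1)}{2}$, $m\in\mathbb{Z}$, are pairwise distinct, so for a given $n$ at most one value of $m$ can satisfy $n=\tfrac{m(3m+1)}{2}$, and hence the sign $(-1)^{\lceil m/2\rceil}$ is unambiguously determined by $n$. If a proof independent of Theorem~\ref{T7} were preferred, one could instead equate the coefficient of $q^n$ in the identity
\[
\left(\sum_{n=0}^{\infty}p_{do}(n)q^n\right)\left(\sum_{n=-\infty}^{\infty}(-1)^nq^{n(3n+1)}\right)=(-q;q^2)_\infty(q^2;q^2)_\infty
\]
coming from \eqref{e45}, and then read off the right-hand side by means of \eqref{1s11}; but the route through Theorems~\ref{T7} and \ref{T1} is shorter, and that is the one I would take.
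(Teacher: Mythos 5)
Your argument is correct and coincides with the paper's own derivation: the corollary is obtained exactly by combining Theorem~\ref{T7} with Theorem~\ref{T1}. The brief remark on the well-posedness of the case distinction is a harmless (and reasonable) addition.
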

\begin{theorem}\label{T8}
	For any integer $n \geq 0$,
	\begin{equation*}
	\sum_{k=-\infty}^{\infty}(-1)^{\lceil{k/2}\rceil}\overline{p}_o\left(n-\frac{k(k+1)}{2}\right)=\sum_{k=-\infty}^{\infty}(-1)^{k}{p_d}(n-k(3k+1)).
	\end{equation*}
\end{theorem}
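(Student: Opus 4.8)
The plan is to follow the generating-function template already used for Theorems \ref{T1}--\ref{T3} and \ref{T7}: translate the claimed identity into an equality of two power series, each written as a product of the generating function of a partition statistic with a twisted theta series, and then equate coefficients of $q^n$. Concretely, the left-hand side of the statement is the coefficient of $q^n$ in
\begin{equation*}
\left(\sum_{m=0}^{\infty}\overline{p}_o(m)q^m\right)\left(\sum_{k}(-1)^{\lceil k/2\rceil}q^{k(k+1)/2}\right),
\end{equation*}
while, by \eqref{1s17}, the right-hand side is the coefficient of $q^n$ in
\begin{equation*}
\left(\sum_{m=0}^{\infty}p_d(m)q^m\right)\left(\sum_{k}(-1)^{k}q^{k(3k+1)}\right),
\end{equation*}
so it is enough to prove that these two series agree identically.

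For the twisted theta series on the right, replacing $q$ by $q^2$ in Euler's pentagonal number theorem \eqref{1s8} gives $\sum_{k}(-1)^{k}q^{k(3k+1)}=(q^2;q^2)_\infty$, whence the right-hand series equals $(-q;q)_\infty(q^2;q^2)_\infty$. For the twisted theta series on the left, I would reuse the identity \eqref{1s6} established in the proof of Theorem \ref{T2}: cancelling the common factor $(-q;q^2)_\infty/(q;q^2)_\infty$ there yields $\sum_{k\ge0}(-1)^{\lceil k/2\rceil}q^{k(k+1)/2}=(q;q^2)_\infty(q^4;q^4)_\infty$. Combined with the Lebesgue product form of the generating function of $\overline{p}_o$, the left-hand series then collapses to
\begin{equation*}
\frac{(-q;q^2)_\infty}{(q;q^2)_\infty}\,(q;q^2)_\infty(q^4;q^4)_\infty=(-q;q^2)_\infty(q^4;q^4)_\infty.
\end{equation*}

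What remains is the elementary infinite-product identity $(-q;q^2)_\infty(q^4;q^4)_\infty=(-q;q)_\infty(q^2;q^2)_\infty$, which I would check via the splittings $(q;q)_\infty=(q;q^2)_\infty(q^2;q^2)_\infty$, $(q^2;q^2)_\infty=(q^2;q^4)_\infty(q^4;q^4)_\infty$ and $(-q;q^2)_\infty(q;q^2)_\infty=(q^2;q^4)_\infty$: both sides reduce to $(q^2;q^2)_\infty/(q;q^2)_\infty$, which by \eqref{1s7} is precisely $\sum_{m\ge0}q^{m(m+1)/2}$. Equating coefficients of $q^n$ then finishes the proof. I expect the only delicate point to be the bookkeeping with the summation range and the exponent $\lceil k/2\rceil$: the triangular exponents $k(k+1)/2$ are invariant under $k\mapsto -1-k$ whereas the exponents $k(3k+1)$ are not, so one must match the bilateral form of the left-hand twisted series against the one-sided identity coming from \eqref{1s6} (a spurious factor of $2$ appears if the index $k$ on the left is genuinely allowed to run over all of $\mathbb{Z}$). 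Apart from that the argument is a direct assembly of the Jacobi triple product specializations already recorded in Section \ref{S2}, needs no new $q$-series input, and — exactly as with the corollaries to Theorems \ref{T4}, \ref{T5} and \ref{T09} — combining it with Theorem \ref{T2} would immediately produce a companion recurrence for $p_d(n)$.
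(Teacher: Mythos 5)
Your argument is correct and essentially identical to the paper's: both reduce the claim to the product identity $(-q;q^2)_\infty(q^4;q^4)_\infty=(-q;q)_\infty(q^2;q^2)_\infty$, reading off the triangular-number theta factor from \eqref{1s6} and the other factor from the pentagonal number theorem with $q$ replaced by $q^2$. Your closing caveat about the summation range is also well taken: since $k(k+1)/2$ and $(-1)^{\lceil k/2\rceil}$ are both invariant under $k\mapsto -1-k$, the left-hand sum as printed (over all $k\in\mathbb{Z}$) is twice the one-sided sum that the generating-function identity actually yields, so that index should start at $k=0$ as in Theorem \ref{T2}.
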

\begin{proof}
	Rewriting relation \eqref{e45}, we see that
	\begin{equation}
	(-q;q)_\infty(-q;q^2)_\infty(q;q^2)_\infty(q^4;q^4)_\infty=(-q;q)_\infty(q^2;q^2)_\infty,
	\end{equation}
	which implies that
	\begin{equation}
	\left(\sum_{n=0}^{\infty}\overline{p}_o(n)q^n\right)\left(\sum_{n=-\infty}^{\infty}(-1)^{\lceil{n/2}\rceil}q^{n(n+1)/2} \right)=\left(\sum_{n=0}^{\infty}p_{d}(n)q^n\right)\left(\sum_{n=-\infty}^{\infty}(-1)^nq^{n(3n+1)}\right),
	\end{equation}
	using \eqref{1s6}.
\end{proof}
By using Theorem \ref{T2} and Theorem \ref{T8}, we obtain the following recurrence relation for $p_{d}(n).$
\begin{coro}
	For any integer $n \geq 0$,
	\begin{equation*}
	\sum_{k=-\infty}^{\infty}(-1)^{k}{p_d}(n-k(3k+1))=\begin{cases}
	1,&\text{if}\ n=\frac{m(m+1)}{2}, \ m \in \mathbb{N}_0,
	\\ 0,&\text{otherwise}.
	\end{cases}
	\end{equation*}
\end{coro}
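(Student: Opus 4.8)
The plan is to obtain this recurrence for $p_d(n)$ with essentially no new work, simply by composing the two facts already proved. Theorem \ref{T8} identifies the sum $\sum_{k=-\infty}^{\infty}(-1)^{k}p_d\!\left(n-k(3k+1)\right)$ with $\sum_{k}(-1)^{\lceil k/2\rceil}\overline{p}_o\!\left(n-k(k+1)/2\right)$, and Theorem \ref{T2} evaluates this last sum: it is $1$ when $n=m(m+1)/2$ for some $m\in\mathbb{N}_0$ and $0$ otherwise. Concatenating the two equalities is the whole argument, so the corollary is immediate once the bookkeeping is in order.

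Concretely, the first step would be to pin down the summation conventions so that the two theorems line up. On the overpartition side the relevant sum is the unilateral one $\sum_{k\ge 0}(-1)^{\lceil k/2\rceil}\overline{p}_o(n-k(k+1)/2)$ of Theorem \ref{T2}, whose ceiling signs arise from the re-indexing $\sum_{m\in\mathbb{Z}}(-1)^m q^{2m^2+m}=\sum_{k\ge 0}(-1)^{\lceil k/2\rceil}q^{k(k+1)/2}$ used to pass to \eqref{1s6}; on the $p_d$ side the sum is genuinely bilateral over $k\in\mathbb{Z}$, running through the generalized pentagonal numbers $k(3k+1)$ coming from the pentagonal number theorem \eqref{1s8} with $q\mapsto q^2$. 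With those ranges fixed, substituting Theorem \ref{T8} into Theorem \ref{T2} is purely formal, and the hypothesis $n\ge 0$ is used only to make the indicator on the right meaningful.

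As an independent check, and as an alternative proof that bypasses the $\overline{p}_o$ machinery entirely, I would extract the identity straight from \eqref{1s7}. Multiplying the generating function \eqref{1s17} for $p_d$ by the pentagonal number theorem in the variable $q^2$ gives $\left(\sum_{n\ge0}p_d(n)q^n\right)\left(\sum_{k\in\mathbb{Z}}(-1)^k q^{k(3k+1)}\right)=(-q;q)_\infty(q^2;q^2)_\infty=(q^2;q^2)_\infty^2/(q;q)_\infty$, and by \eqref{1s7} the right side equals $\sum_{m\ge0}q^{m(m+1)/2}$; comparing coefficients of $q^n$ yields exactly the claimed recurrence. This route makes transparent that the corollary is, at bottom, a coefficient-wise reading of \eqref{1s7}, and it is essentially the same computation already occurring inside the proof of Theorem \ref{T8}.

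There is no genuine obstacle here; the only point that requires care is the indexing just described. If one carelessly combines the literal bilateral left-hand side of Theorem \ref{T8} with the unilateral right-hand side of Theorem \ref{T2}, a spurious factor of $2$ creeps into the constant, because each triangular number $k(k+1)/2$ is hit by both $k$ and $-k-1$ with the same sign $(-1)^{\lceil k/2\rceil}=(-1)^{\lceil(-k-1)/2\rceil}$. Reconciling these ranges, that is, observing that the $\overline{p}_o$-sum which actually equals the $p_d$-sum is the unilateral one of Theorem \ref{T2}, is all that is needed to finish.
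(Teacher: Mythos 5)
Your proposal is correct and is essentially the paper's own argument: the corollary is obtained by substituting Theorem \ref{T8} into Theorem \ref{T2} (the paper offers no more than this one-line composition), and your alternative derivation from \eqref{1s7} and \eqref{1s17} is just the generating-function computation already implicit in the proof of Theorem \ref{T8}. Your remark about the unilateral versus bilateral indexing is well taken --- as literally printed, the bilateral sum in Theorem \ref{T8} is twice the unilateral sum of Theorem \ref{T2}, so the range on the $\overline{p}_o$ side must indeed be read as $k\ge 0$ (consistent with \eqref{1s6}) for the constant $1$ rather than $2$ to come out.
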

\begin{center}{\bf Acknowledgement}\end{center}
The authors would like to
thank Dr B. Hemanthkumar for information about the article \cite{MM1} and also for his valuable suggestions.
\section{Compliance with ethical standards}
\textbf{Conflict of interest} The author declares that there is no conflict of interest regarding the publication of this article.

	{}

\begin{thebibliography}{}
	\bibitem{GE}
	   Andrews, G. E. (1998) \textit{The theory of partitions.} Cambridge University Press, Cambridge.
	   	\bibitem{BCB1}
	   Berndt, B. C. (1991): \textit{Ramanujan's Notebooks, Part III}, Springer-Verlag, New York.
	\bibitem{SCC}
		Chen, S. C. (2014). On the number of overpartitions into odd parts, {\em Discrete Mathematics}, {\bf 325}, 32--37. 
	\bibitem{CYK}
		Choliy, Y., Kolitsch, L. W. \& Sills, A. V. (2018). Partition recurrences, \textit{Integers} 18(B), \#A1.
	\bibitem{CL}
		Corteel, S. and Lovejoy, J. (2004). Overpartitions, {\em Trans. Amer. Math. Soc}. {\bf 356},  1623--1635.
		\bibitem{EJA}
		Ewell, John A. (1973). Partition recurrences. \textit{Journal of Combinatorial Theory, Series A}, \textbf{14(1)}, 125--127.
	\bibitem{HKCK}
		Hemanthkumar, B., and Chandankumar, S. (2021). New congruences modulo small powers of 2 for overpartitions into odd parts, \textit{Matematicki Vesnik}, \textbf{73(2)}, 141--148.
	\bibitem{HS1}
		Hirschhorn, M. D. and Sellers, J. A. (2005). An infinite family of overpartitions modulo 12, {\em Integers}, {\bf5}, \#A20
	\bibitem{HS2}
		Hirschhorn, M. D. and Sellers, J. A. (2005). Arithmetic relations for overpartitions, {\em  J. Combin. Math. Combin. Comput.} {\bf53}, 65--73.
    \bibitem{HS}
		Hirschhorn, M. D. and Sellers, J. A. (2006) Arithmetic properties of overpartitions into odd parts, {\em Ann. Comb.}, {\bf 10(3)}, 353--367.
		\bibitem{Lb}
		Lebesgue, V. A. (1840). Sommation de quelques s$\acute{e}$ries,  {\em J. Math. Pure. Appl.} {\bf{5}}, 42--71.
    \bibitem{KM}
	    Mahlburg, K. (2014.) The overpartition function modulo small powers of 2, {\em Discrete Math.} {\bf 286}, 263--267.
	    \bibitem{MM2}
	    Merca, M. (2017) New recurrences for Euler's partition function. \textit{Turkish Journal of Mathematics} \textbf{41(5)}, 1184-1190.
	    	\bibitem{MM1}
	    Merca, M. (2017). New relations for the number of partitions with distinct even parts. \textit{Journal of Number Theory}, 176, 1-12.
	 \bibitem{RS}
	    da Silva, Robson, and Pedro Diniz Sakai.: New partition function recurrences. arXiv preprint arXiv:2007.07641 (2020).
	\end{thebibliography}
\end{document}